\newcommand{\R}{\mathbb R}
\newcommand{\xtilde}{\widetilde{X}}
\newtheorem{theorem}{Theorem}[section] 
\newtheorem*{theorem*}{Theorem}
\newtheorem{lemma}[theorem]{Lemma}
\newtheorem*{conjecture*}{Conjecture}
\newtheorem*{question*}{Question}
\newtheorem*{lemma*}{Lemma}
\newtheorem{proposition}[theorem]{Proposition}
\newtheorem*{corollary*}{Corollary}
\theoremstyle{definition}
\newtheorem{remark}[theorem]{Remark}
\newtheorem*{remark*}{Remark}
\newtheorem*{example*}{Example}
\newtheorem*{remarks*}{Remarks}
\newtheorem*{addenda*}{Addenda}
\newtheorem*{construction*}{Construction}
\newtheorem*{ques*}{Question}
\theoremstyle{definition}
\title{Exotic aspherical 4-manifolds}
\author[Davis]{Michael Davis}
\address{Department of Mathematics, Math Tower
\newline\indent Ohio State University
\newline\indent Columbus, OH 43210}
\email{davis.12@osu.edu}
\author[Hayden]{Kyle Hayden}
\address{Department of Mathematics \& Computer Science
\newline \indent Rutgers University-Newark \newline \indent Newark, NJ 07102}
\email{kyle.hayden@rutgers.edu}
\author[Huang]{Jingyin Huang}
\address{Department of Mathematics, Math Tower
\newline\indent Ohio State University
\newline\indent Columbus, OH 43210}
\email{huang.929@osu.edu}
\author[Ruberman]{Daniel Ruberman}
\address{Department of Mathematics, MS 050\newline\indent Brandeis
University \newline\indent Waltham, MA 02454
}
\email{ruberman@brandeis.edu}
\author[Sunukjian]{Nathan Sunukjian}
\address{Department of Mathematics and Statistics\newline\indent Calvin University\newline\indent Grand Rapids, MI 49506}
\email{nss9@calvin.edu}
\thanks{DR was partially supported by NSF grant DMS-1952790.  KH was partially supported by DMS-2243128. JH was partially supported by a Sloan fellowship and NSF grant DMS-2305411.}
\date{\today}
\begin{document}

\begin{abstract}
We prove that there exist closed, aspherical, smooth 4-manifolds that are homeomorphic but not diffeomorphic. These provide counterexamples to a smooth analog of the Borel conjecture in dimension four.
\end{abstract}

\maketitle

\section{Introduction}
The Borel conjecture predicts that closed aspherical manifolds are topologically rigid, with their homeomorphism type  determined by their homotopy type. This conjecture seeks to generalize the rigidity exhibited by hyperbolic manifolds and other (nonpositively curved) locally symmetric spaces, a setting in which every homotopy equivalence is realized by an isometry \cite{mostow,mostow:book}.

This paper concerns  the smooth version of the Borel conjecture, which asks whether closed, aspherical $n$-manifolds that are homotopy equivalent are in fact diffeomorphic.\footnote{See~\cite{weinberger:borel} for some interesting historical comments on the formulation of the Borel conjecture as a statement about homeomorphism, rather than diffeomorphism, of homotopy equivalent aspherical manifolds.} 
This is classical in dimensions $n \leq 2$ and is known to hold for orientable $3$-manifolds, using Perelman's results~\cite{perelman1,perelman2,perelman3,morgan-tian:geometrization,besson-etal:geometrisation}. On the other hand, it is known~\cite[Chapter 15]{wall:surgery} that there exist exotic aspherical manifolds in dimensions at least $5$, including exotic smooth structures on $T^n$ for $n\geq 5$. In dimensions at least $7$, such examples can be obtained by connected sum of certain aspherical manifolds (such as tori or stably parallelizable hyperbolic manifolds) with an exotic sphere~\cite{bustamante-tshishiku:symmetries,farrell-jones:exotic}. We resolve the last remaining case, in dimension 4:

\begin{theorem}\label{thm:exotic-mfld}
There exist pairs of smooth, closed, aspherical 4-manifolds that are homeomorphic but not diffeomorphic.
\end{theorem}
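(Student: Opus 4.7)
\medskip

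\noindent \emph{Proof plan.} The natural strategy combines two well-developed ideas: Davis's reflection group trick, which produces closed aspherical manifolds from compact aspherical manifolds with boundary, and cork twisting, which in dimension four is the standard mechanism producing homeomorphic but non-diffeomorphic smooth 4-manifolds. My plan is to reduce the theorem to producing a pair of compact smooth aspherical 4-manifolds with boundary, $X_0$ and $X_1$, that agree on their common boundary, are homeomorphic rel boundary, and are distinguished smoothly by some invariant that persists after the reflection construction.

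\smallskip

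First, I would construct $X_0$ as a compact aspherical smooth 4-manifold with nonempty boundary, chosen so that $\partial X_0$ admits a flag triangulation (so that the associated right-angled Davis complex is CAT(0), guaranteeing that the closed reflection manifold $M(X_0)$ is aspherical). Natural candidates for $X_0$ include pieces of closed hyperbolic or nonpositively curved 4-manifolds cut along totally geodesic hypersurfaces, or suitable blocks built from products of surfaces. Next, I would embed a cork $(C,\tau)$ smoothly in the interior of $X_0$ and define $X_1$ by excising $C$ and regluing via $\tau$. Since $C$ is contractible, the cork twist preserves the fundamental group and hence asphericity, and Freedman's theorem supplies a homeomorphism $X_0 \to X_1$ rel boundary. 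Applying the reflection trick to both $X_0$ and $X_1$ using the same flag triangulation of the common boundary yields closed aspherical smooth 4-manifolds $M_0$ and $M_1$; the boundary-preserving homeomorphism extends naturally across the reflection construction to give $M_0 \cong M_1$ topologically.

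\smallskip

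The main obstacle, which I expect to consume the bulk of the work, is proving that $M_0$ and $M_1$ are not diffeomorphic. Standard gauge-theoretic obstructions such as Seiberg--Witten invariants are most easily applied in the simply connected setting, whereas $M_0$ and $M_1$ have large Coxeter-type fundamental groups. The most promising attack is to pass to a finite regular cover $\widetilde{M}_i \to M_i$ corresponding to a torsion-free finite-index subgroup of the Coxeter group; such a cover is assembled from many copies of $X_i$ glued along portions of their boundaries. One then seeks a relative Seiberg--Witten or Bauer--Furuta-type invariant of the pair $(X_i,\partial X_i)$ that distinguishes $X_0$ from $X_1$, together with a gluing principle that promotes this distinction to an invariant separating $\widetilde{M}_0$ from $\widetilde{M}_1$. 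Because there is considerable freedom in both the choice of base piece $X_0$ and the cork $(C,\tau)$, one can hope to engineer the construction so that the smooth distinction (for instance, an exotically embedded surface in $X_0$ that fails to be smoothly isotopic to its topological partner in $X_1$) propagates through the reflection construction and is detected in a finite cover.
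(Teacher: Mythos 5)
Your proposal captures the paper's high-level strategy correctly: start with a compact aspherical 4-manifold with boundary, modify by cork twist, apply Davis's reflection group trick, and distinguish the resulting closed manifolds by a smooth invariant detected in a cover. This is indeed the paper's plan. However, there is a substantial gap in the final step, along with several technical choices you leave open that turn out to matter.

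The gap is the lifting problem. You propose to distinguish a pair of covers $\widetilde{M}_0, \widetilde{M}_1$ (or, as the paper does, the canonical Coxeter covers $D(X), D(X')$). But distinguishing two particular covers does not, by itself, show $M_0 \not\cong M_1$: an arbitrary diffeomorphism $M_0 \to M_1$ need not carry the chosen subgroup of $\pi_1(M_0)$ to the corresponding subgroup of $\pi_1(M_1)$, so it may not lift to a map of the chosen covers at all. This is where the bulk of the paper's work actually lies. The authors solve it by engineering the construction so that $\pi_1(D(X))$ is a \emph{characteristic} subgroup of $\pi_1(Q(X))$: they take $\pi_1(X) \cong \mathbb{Z}^2$ and impose the \emph{flag-no-square} condition on the triangulation of $\partial X$, which forces the associated right-angled Coxeter group to be word hyperbolic, hence without $\mathbb{Z}^2$ subgroups. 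They then show (Lemma~\ref{lem:characteristic}) that every $\mathbb{Z}^2$ subgroup of $\pi_1(Q(X))$ lies in $\pi_1(D(X))$, which forces every automorphism of $\pi_1(Q(X))$ to preserve $\pi_1(D(X))$. Without some argument of this kind, your proof does not close. Note also that this rules out some of your candidate choices of $X_0$ (e.g.\ pieces of hyperbolic 4-manifolds), whose fundamental groups would not be $\mathbb{Z}^2$ and would not obviously admit an analogous characteristic-subgroup argument.

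Your proposed smooth invariant is also vaguer and probably harder to execute than what the paper does. You suggest relative Seiberg--Witten or Bauer--Furuta invariants plus a gluing formula, but the covers are built from copies of $X_i$ with \emph{both} orientations (reflections reverse orientation), which is hostile to gauge-theoretic gluing. The paper instead uses the minimal genus of a homologically essential embedded surface. This requires: (i) choosing $X$ to have a Stein structure so that the Lisca--Mati\'c adjunction inequality gives genus $\geq 2$ for essential surfaces in $X$ and in $-X$; (ii) choosing $X$ to embed in $B^4$, so that in the boundary-sum decomposition $P_n \cong X_1 \natural \cdots \natural X_n$ of a compact piece of $D(X)$, all but one summand can be capped off to a ball, reducing the genus question to a single copy of $\pm X$; and (iii) proving that compact pieces $P_n$ of the Davis complex are boundary sums (Lemma~\ref{lem:boundary-sum}), which is not automatic and needs Davis's disk lemma plus a PL isotopy argument. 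On the cork-twisted side $X'$, one checks directly that $H_2(X')$ is generated by an embedded torus. So the distinguishing feature is the genus function on homology, propagated through the infinite cover by a combinatorial decomposition --- a more elementary and more robust mechanism than a gluing theorem for gauge-theoretic invariants.

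In short: the skeleton of your plan (cork twist $+$ reflection trick $+$ distinguish covers) matches the paper, but you are missing the characteristic-subgroup argument needed to make ``distinguish the covers'' actually imply ``not diffeomorphic,'' and the analytic machinery you propose for the smooth distinction is replaced in the paper by a more hands-on genus argument that hinges on carefully chosen properties of the block $X$ (Stein, $\pi_1 = \mathbb{Z}^2$, embeds in $B^4$).
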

In fact, we will find infinitely many such pairs; see Remark~\ref{rem:infinite}.
The key adjective in the theorem is the word \emph{closed}.
It was previously known that there are exotic smooth structures on $\R^4$ (cf.~\cite{gompfstipsicz}) and on compact aspherical manifolds with boundary, in fact on compact contractible manifolds~\cite{akbulut-ruberman:absolute}. (If one is working relative to a fixed identification of the boundary, as in some formulations of the Borel conjecture, then exotic contractible manifolds go back to~\cite{akbulut:cork}.) 

Among closed manifolds, most previous constructions of exotic 4-manifolds appear ill-suited to the aspherical setting. For example, one obstacle is that closed, aspherical 4-manifolds must have infinite fundamental groups, and these are typically not known to be ``good'' groups that allow~\cite{freedman-quinn} topological surgery theory and the s-cobordism theorem to function in the $4$-dimensional setting. Therefore, one must take a more local approach to establishing homeomorphisms; our exotic pairs $Q,Q'$ are related by explicit cork twisting, which is known to preserve homeomorphism type by work of Freedman \cite{freedman}. Since cork twisting takes place in a contractible region, it has more in common with counterexamples obtained by sums with exotic spheres than with other constructions from surgery theory.

\begin{figure}
\center
 \def\svgwidth{\linewidth}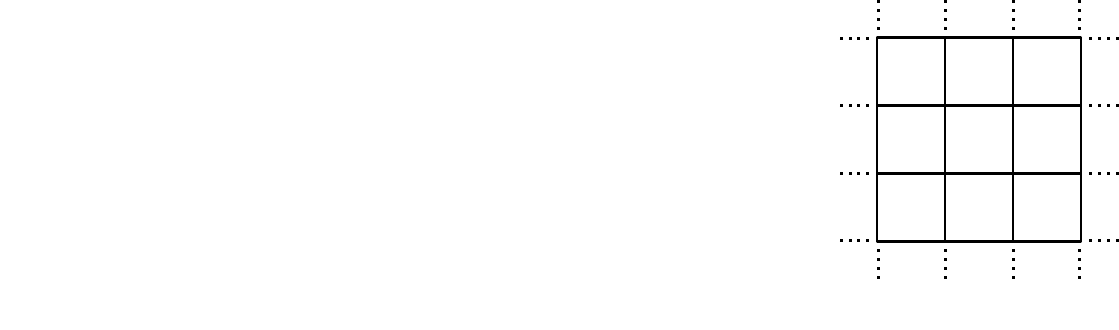 
\caption{\textsc{(Left)} The 4-manifold $X$ is the union of a contractible 4-manifold and a thickened, once-punctured torus. \textsc{(Right)} The 4-manifold $D(X)$ is a union of countably many copies of $X$ and $-X$, glued along 3-cells in their boundaries.}\label{fig:schematic}
\end{figure}

The construction underlying Theorem~\ref{thm:exotic-mfld} begins with a pair of exotic aspherical 4-manifolds $X,X'$ with boundary (based on \cite{hp:embedding}) and then produces closed 4-manifolds by applying the reflection group trick developed by the first author \cite{davis:aspherical}.  
As depicted schematically in Figure~\ref{fig:schematic},  each of $X$ and $X'$ is obtained from a contractible 4-manifold $C$ (namely the Akbulut cork \cite{akbulut:cork}) by attaching a thickened punctured torus, hence has the homotopy type of $T^2$, which is aspherical. 
Necessary data for the reflection group trick includes a triangulation $\mathcal{T}$ of $\partial X$ as a flag complex.  The $1$-skeleton of $\mathcal{T}$ determines a right-angled Coxeter group $W(\mathcal{T})$.
The reflection group trick proceeds by constructing an associated  noncompact space $D(X)$ (resp.,~$D(X')$) built from infinitely many copies of $X$ and $-X$ (resp.,~$X'$ and $-X'$), as depicted schematically on the right-hand side of Figure~\ref{fig:schematic}. The closed aspherical 4-manifolds $Q(X)$ and $Q(X')$ claimed in Theorem~\ref{thm:exotic-mfld} are then obtained as certain quotients of $D(X)$ and $D(X')$. 

The main claim in the theorem is then proved in two steps. In  \S\ref{sec:exotic}, we construct a homeomorphism between $Q(X)$ and $Q(X')$, and argue that it is not homotopic to a diffeomorphism.\footnote{In many treatments, e.g.,~\cite{farrell:borel,weinberger:borel}, the Borel conjecture is stated as saying that any homotopy equivalence is homotopic to a homeomorphism, so that this step would already give a counterexample to that version of the smooth Borel conjecture.} In fact, we show that there is no diffeomorphism between $Q(X)$ and $Q(X')$ that lifts to the covering spaces $D(X)$ and $D(X')$. (These covers are distinguished by comparing the genera of smoothly embedded surfaces; here some care with obstructions is required because these covers are built from copies of $X$ with both orientations.) This reduces Theorem~\ref{thm:exotic-mfld} to a lifting problem that we solve in \S\ref{sec:characteristic}; the key is an algebraic argument showing that the fundamental group $\pi_1(D(X))$ (resp.,~$\pi_1(D(X'))$) is characteristic in $\pi_1(Q(X))$ (resp.~$\pi_1(Q(X'))$). In order to make this algebraic argument work we need to make a special choice of $\mathcal{T}$: it must satisfy the ``flag-no-square condition.'' This implies that the resulting Coxeter group is word hyperbolic, a fact that we need in our algebraic argument.
The fact that $\pi_1(D(X))$ and $\pi_1(D(X'))$ are characteristic subgroups implies that $Q(X)$ and $Q(X')$ are not diffeomorphic.

We close this discussion by noting that the reflection group trick has had many applications to the construction of ``exotic'' spaces and groups, including closed aspherical manifolds that are not covered by Euclidean space \cite{davis:aspherical},  Poincar\'e duality groups that are not finitely presented \cite{davis:cohomology} (cf.~\cite{bestvina-brady}), 4-dimensional locally CAT(0)-manifolds that do not admit a Riemannian metric of nonpositive curvature \cite{djl:cat(0)}, and --- closer to our purposes here --- aspherical topological manifolds that admit no smooth structure \cite{davis-hausmann}. However, to our knowledge, Theorem~\ref{thm:exotic-mfld} is the first application of the reflection group trick to the study of exotic smooth structures on manifolds. Given the trick's capacity for promoting exotic phenomena from the compact to the closed setting, we expect further applications in this direction. 
     
\noindent 
\subsection*{Acknowledgments} The authors thank Lisa Piccirillo for valuable discussions at the beginning of this project. We also thank Jim Davis and Bena Tshishiku for corrections to some of the introductory historical remarks. Our further thanks go to the referee for helpful comments. 

\section{The input manifolds}\label{sec:input}

Let  $X$ be the 4-manifold shown in Figure~\ref{fig:input-1}, which arises from a very slight modification to the examples underlying \cite[Theorem~4.1]{hp:embedding}. The manifold $X$ is obtained from the contractible Akbulut cork $C$ \cite{akbulut:cork} by attaching a ``genus-1 handle'' (i.e., a copy of $F \times D^2$ where $F$ is a genus-1 surface with one boundary component) along a knot in $\partial C$. The embedded copy $C \subset X$ can be seen as the union of the $0$-handle, the 1-handle represented by the topmost dotted curve, and the inner 2-handle. 

Our proof of Theorem~\ref{thm:exotic-mfld} will leverage the following properties of $X$. 

\begin{figure}
\center
 \def\svgwidth{\linewidth}
\begingroup%
  \makeatletter%
  \providecommand\color[2][]{%
    \errmessage{(Inkscape) Color is used for the text in Inkscape, but the package 'color.sty' is not loaded}%
    \renewcommand\color[2][]{}%
  }%
  \providecommand\transparent[1]{%
    \errmessage{(Inkscape) Transparency is used (non-zero) for the text in Inkscape, but the package 'transparent.sty' is not loaded}%
    \renewcommand\transparent[1]{}%
  }%
  \providecommand\rotatebox[2]{#2}%
  \newcommand*\fsize{\dimexpr\f@size pt\relax}%
  \newcommand*\lineheight[1]{\fontsize{\fsize}{#1\fsize}\selectfont}%
  \ifx\svgwidth\undefined%
    \setlength{\unitlength}{872.81861313bp}%
    \ifx\svgscale\undefined%
      \relax%
    \else%
      \setlength{\unitlength}{\unitlength * \real{\svgscale}}%
    \fi%
  \else%
    \setlength{\unitlength}{\svgwidth}%
  \fi%
  \global\let\svgwidth\undefined%
  \global\let\svgscale\undefined%
  \makeatother%
  \begin{picture}(1,0.45102966)%
    \lineheight{1}%
    \setlength\tabcolsep{0pt}%
    \put(0,0){\includegraphics[width=\unitlength,page=1]{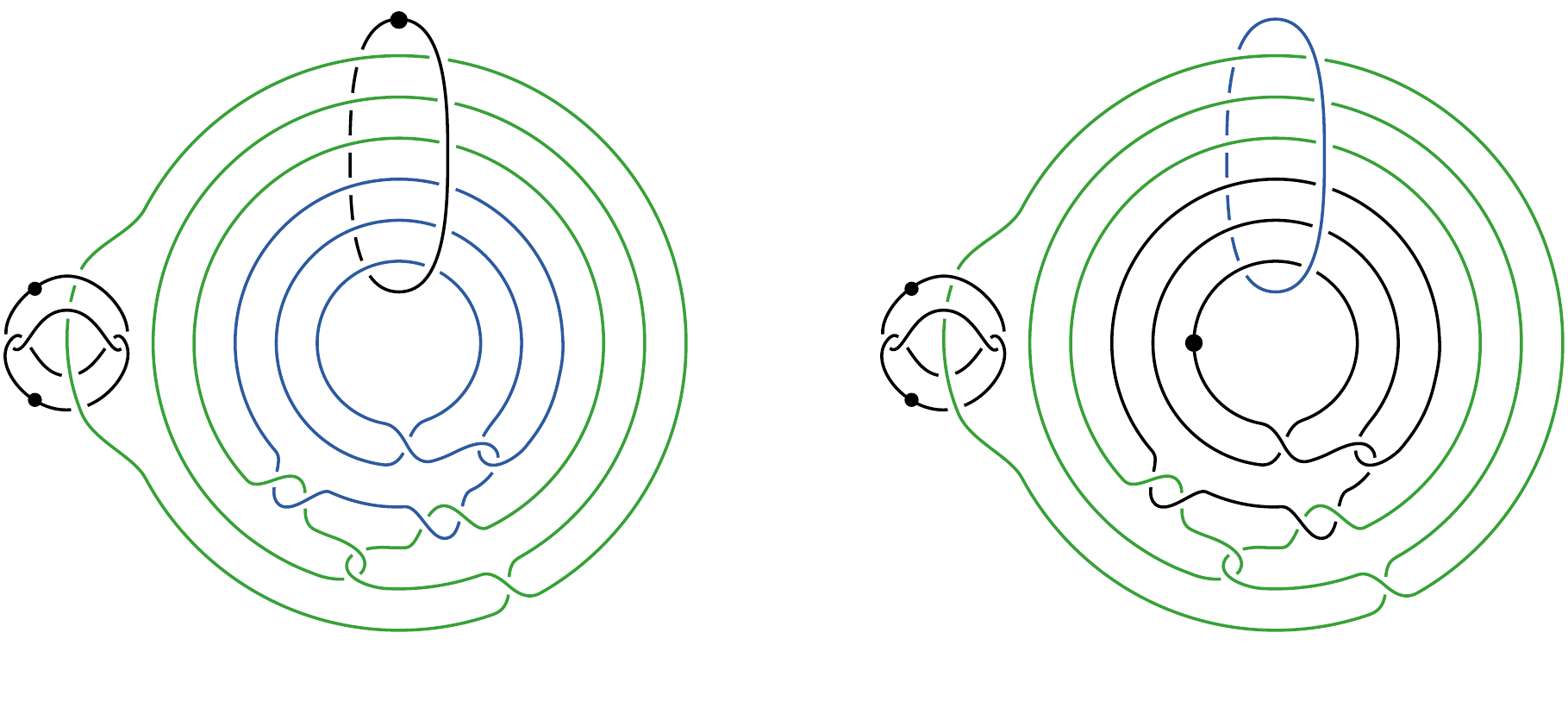}}%
    \put(0.28523325,0.20764837){\color[rgb]{0.17254902,0.35294118,0.62745098}\makebox(0,0)[t]{\smash{\begin{tabular}[t]{c}$0$\end{tabular}}}}%
    \put(0.83966598,0.43551445){\color[rgb]{0.17254902,0.35294118,0.62745098}\makebox(0,0)[t]{\smash{\begin{tabular}[t]{c}$0$\end{tabular}}}}%
    \put(0.42481263,0.1191904){\color[rgb]{0.21568627,0.63137255,0.21568627}\makebox(0,0)[t]{\smash{\begin{tabular}[t]{c}$0$\end{tabular}}}}%
    \put(0.98410461,0.1191904){\color[rgb]{0.21568627,0.63137255,0.21568627}\makebox(0,0)[t]{\smash{\begin{tabular}[t]{c}$0$\end{tabular}}}}%
    \put(0.25449991,0.00553111){\color[rgb]{0,0,0}\makebox(0,0)[t]{\smash{\begin{tabular}[t]{c}$X$\end{tabular}}}}%
    \put(0.81379982,0.00553111){\color[rgb]{0,0,0}\makebox(0,0)[t]{\smash{\begin{tabular}[t]{c}$X'$\end{tabular}}}}%
    \put(0,0){\includegraphics[width=\unitlength,page=2]{aspherical-base-color.pdf}}%
  \end{picture}%
\endgroup%
\caption{Kirby diagrams for the 4-manifolds $X$ and $X'$.}\label{fig:input-1}
\end{figure}

\begin{proposition}\label{prop:properties}
The 4-manifold $X$ satisfies the following:
\begin{enumerate}[label={\normalfont(\alph*)}]
\item $X$ is homotopy equivalent to the torus,
\item $X$ embeds smoothly in $B^4$, 
\item every homologically essential, smoothly embedded surface in $X$ has genus $\geq 2$ (hence the same is true of $-X$), and
\item $X$ is homeomorphic to a smooth 4-manifold $X'$ such that $H_2(X')$ is generated by a smoothly embedded torus.
\end{enumerate}
\end{proposition}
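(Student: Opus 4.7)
The plan is to address (a)--(d) in order, with (c) being the technical heart of the proposition. Claim (a) is structural: $X$ is the pushout of the span $C \longleftarrow \partial F \times D^2 \longrightarrow F \times D^2$, so since $C$ is contractible and both maps are cofibrations, collapsing $C$ to a point yields $X \simeq (F \times D^2)/(\partial F \times D^2) \simeq F/\partial F \simeq T^2$. In particular, $H_2(X) \cong \Z$, with generator represented topologically by the union of the core $F \times \{0\}$ with any topological disk $\Delta \subset C$ bounded by $K$; such a $\Delta$ exists because $C$ is contractible.

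For (b), I would follow \cite{hp:embedding}: the Akbulut cork $C$ is known to embed smoothly in $B^4$, and an explicit sequence of Kirby moves on the diagram in Figure~\ref{fig:input-1} (matching the one in \cite{hp:embedding}, up to the small modification distinguishing the present $X$) places the handle $F \times D^2$ disjointly from $C$ inside $B^4$, exhibiting $X$ as a smooth codimension-$0$ submanifold.

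Claim (c) is the main obstacle. Given a homologically essential smoothly embedded closed surface $\Sigma \subset X$, I would first make $\Sigma$ transverse to the separating 3-manifold $\partial F \times D^2$ lying between $C$ and the handle. Inessential components of $\Sigma \cap (\partial F \times D^2)$ can be removed by standard surgeries, and the homological condition forces the remaining essential circles to have $\pm 1$ algebraic count with a meridional fiber $\{x\} \times D^2$; after further tubing, one may assume $\Sigma \cap C$ and $\Sigma \cap (F \times D^2)$ are each connected surfaces bounded by a single copy of $K$. The additivity of Euler characteristic across the gluing circle gives $g(\Sigma) = g(\Sigma \cap C) + g(\Sigma \cap (F \times D^2))$. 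Since $K$ represents a non-trivial element (in fact the commutator of the free generators) of $\pi_1(F \times D^2) = \pi_1(F)$, the second summand is at least $1$. Composing with the embedding of (b), $\Sigma \cap C$ yields a smoothly embedded surface in $B^4$ bounded by $K$, so the proposition reduces to the smooth slice genus bound $g_4(K) \geq 1$, supplied by \cite{hp:embedding} via a Khovanov-homological invariant (e.g.\ the Rasmussen $s$-invariant). This gauge/categorified input is the hard part. The assertion about $-X$ is then automatic, since orientation reversal of the ambient 4-manifold does not affect the genus of an embedded surface.

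For (d), the manifold $X'$ is obtained from $X$ by regluing the Akbulut cork via its canonical involution $\tau : \partial C \to \partial C$. By Freedman's theorem, $\tau$ extends to a self-homeomorphism of $C$; extending by the identity outside $C$ produces the homeomorphism $X \cong X'$. By the choice of $K$ in \cite{hp:embedding}, the new attaching curve $K' := \tau(K) \subset \partial C$ bounds a smoothly embedded disk $D' \subset C$. Gluing $D'$ to $F \times \{0\}$ inside $X'$ then gives a smoothly embedded torus generating $H_2(X') \cong \Z$.
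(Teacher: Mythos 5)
Parts (a), (b), and (d) of your proposal match the paper's argument in substance: (a) collapses $C$ and compares to $T^2$; (b) appeals to the Kirby calculus of \cite{hp:embedding} (the paper does this explicitly, cancelling the 1-handles against meridional 2-handles to reveal a 2-component unlink); and (d) is the cork twist followed by observing that the new attaching curve bounds a smooth disk in $C$, then capping with the core punctured torus. These are fine.

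Part (c) is where your proposal diverges from the paper and where it has a genuine gap. The paper does not decompose the surface at all; it places a Stein structure on $X$ using Gompf's standard form (Figure~\ref{fig:stein}), computes the rotation and Thurston--Bennequin numbers of the two Legendrian attaching curves $B$ and $G$, evaluates $\langle c_1(X),\alpha\rangle = r(B)-r(G) = 2$, and applies the Lisca--Mati\'c adjunction inequality to get $2g(S)-2 \geq 2|k| + 0$ for any surface with $[S] = k\alpha \neq 0$. This is one clean, global inequality; no decomposition, no slice genus, and no Khovanov homology (the relevant machinery is Seiberg--Witten/Stein geometry, not the $s$-invariant).

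Your decomposition approach has a fatal flaw at the step where you invoke the embedding from (b). You assert that because $X$ embeds in $B^4$, the piece $\Sigma \cap C$ gives a surface in $B^4$ bounding $K$, reducing the problem to $g_4(K)\geq 1$. But the embedding $X \hookrightarrow B^4$ places $\partial C$ (and hence $K$) in the \emph{interior} of $B^4$, not on $\partial B^4 = S^3$: $\partial C$ is a homology 3-sphere distinct from $S^3$, so it cannot be carried to $\partial B^4$ under a codimension-zero embedding. Every embedded circle in the interior of $B^4$ is unknotted by codimension-3 general position (Zeeman unknotting) and bounds a smooth disk, so the embedding into $B^4$ gives you no genus obstruction whatsoever. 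The fact you actually need --- that $K$ bounds no smooth disk in the cork $C$ itself --- is a statement internal to $C$, not about $B^4$, and it is precisely the kind of statement that is proved by the adjunction inequality. Your argument thus defers the hard step to a fact that is harder, not easier, than what the paper proves directly. Separately, your decomposition of $\Sigma$ into two connected pieces each with a single boundary circle parallel to $K$ is asserted without justification and does not obviously survive when $[\Sigma]=k\alpha$ with $|k|\geq 2$; the paper's adjunction-inequality approach handles all $k$ uniformly and even yields the stronger bound $g(S)\geq |k|+1$ used in Remark~\ref{rem:extend}.
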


Most of these properties follow verbatim from the proof of \cite[Theorem~4.1]{hp:embedding}; for the reader's convenience, we sketch the arguments and add detail only where necessary. 
 
\begin{proof}
For (a), recall from above that $X$ is obtained from the contractible Akbulut cork $C$ by attaching a genus-1 handle $F \times D^2$. Collapsing $F \times D^2$ to $F \times 0$ and $C$ to a point (hence $\partial F$ to a point) yields a map to $T^2$ that is easily seen to be a weak homotopy equivalence, hence a homotopy equivalence. 

For (b), first attach 0-framed 2-handles along meridians to all of the 1-handle curves of $X$ in Figure~\ref{fig:input-1}. This has the diagrammatic effect of erasing the 1-handles of $X$, leaving only the 0-framed 2-handle curves. These can be seen to form a 2-component unlink, so attaching two 3-handles yields $B^4$.

For (c), consider the handle diagram for $X$ shown in Figure~\ref{fig:stein}, which is in Gompf's standard form \cite{gompf:stein}. It can be checked that the Thurston-Bennequin numbers $tb$ and rotation numbers $r$ of the (oriented) 2-handle curves $B$ and $G$ satisfy
$$tb(B)=1, \ \ tb(G)=1 \quad \qquad \qquad r(B)=1, \ \  r(G)=3.$$

The 2-handle framings both correspond to the integer $0=tb-1$, hence, $X$ admits a Stein structure \cite{gompf:stein}. Observe that $H_2(X)$ is generated by a class $\alpha$ corresponding to the difference of the 2-handles attached along the oriented curves $B$ and $G$.  Gompf's formula \cite[Proposition~2.1]{gompf:stein} for the Chern class $c_1(X)$ of the Stein structure on $X$ yields
$$|\langle c_1(X) , \alpha \rangle| = |r(B)-r(G)| =| 1-3|=2.$$ 

Now suppose that $S$ is a smoothly embedded surface in $X$ satisfying $[S]=k\alpha$ for $k \neq 0$. The adjunction inequality for homologically essential, smoothly embedded surfaces of nonnegative self-intersection in $X$ \cite{lisca-matic} gives us
$$2g(S)-2 \geq \left| \langle c_1(X), [S] \rangle \right| + [S] \cdot [S] = |\langle c_1(X),k\alpha \rangle | + k^2 (\alpha \cdot \alpha)= 2|k|+k^2 \cdot 0,$$
hence $2g(S)\geq 2|k|+2$ and thus $g(S) \geq |k|+1 \geq 2$.

\begin{figure}
\center
 \def\svgwidth{.575\linewidth}
\begingroup%
  \makeatletter%
  \providecommand\color[2][]{%
    \errmessage{(Inkscape) Color is used for the text in Inkscape, but the package 'color.sty' is not loaded}%
    \renewcommand\color[2][]{}%
  }%
  \providecommand\transparent[1]{%
    \errmessage{(Inkscape) Transparency is used (non-zero) for the text in Inkscape, but the package 'transparent.sty' is not loaded}%
    \renewcommand\transparent[1]{}%
  }%
  \providecommand\rotatebox[2]{#2}%
  \newcommand*\fsize{\dimexpr\f@size pt\relax}%
  \newcommand*\lineheight[1]{\fontsize{\fsize}{#1\fsize}\selectfont}%
  \ifx\svgwidth\undefined%
    \setlength{\unitlength}{410.20920641bp}%
    \ifx\svgscale\undefined%
      \relax%
    \else%
      \setlength{\unitlength}{\unitlength * \real{\svgscale}}%
    \fi%
  \else%
    \setlength{\unitlength}{\svgwidth}%
  \fi%
  \global\let\svgwidth\undefined%
  \global\let\svgscale\undefined%
  \makeatother%
  \begin{picture}(1,0.68157579)%
    \lineheight{1}%
    \setlength\tabcolsep{0pt}%
    \put(0,0){\includegraphics[width=\unitlength,page=1]{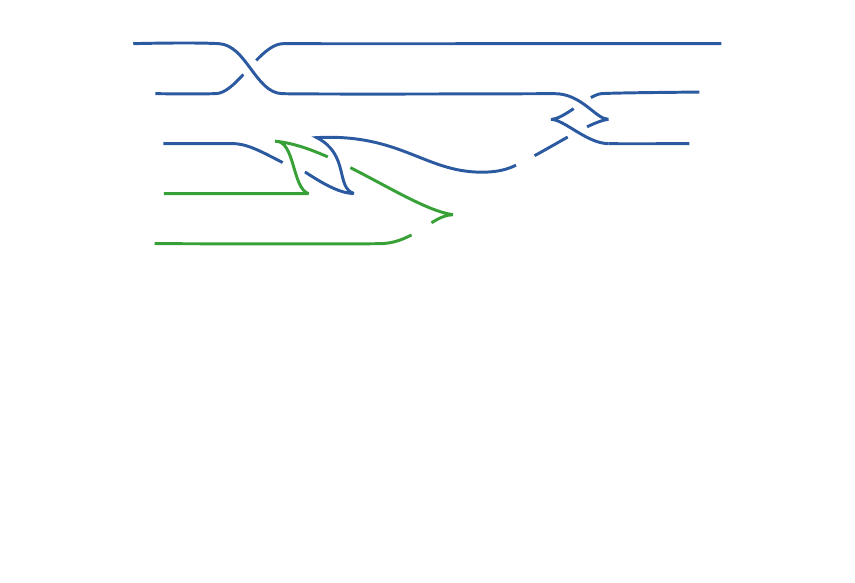}}%
    \put(0.20995687,0.65315811){\color[rgb]{0.17254902,0.35294118,0.62745098}\makebox(0,0)[lt]{\lineheight{1.25}\smash{\begin{tabular}[t]{l}$B$\end{tabular}}}}%
    \put(0.75247974,0.28381561){\color[rgb]{0.21568627,0.63137255,0.21568627}\makebox(0,0)[lt]{\lineheight{1.25}\smash{\begin{tabular}[t]{l}$0$\end{tabular}}}}%
    \put(0.75247974,0.64950148){\color[rgb]{0.17254902,0.35294118,0.62745098}\makebox(0,0)[lt]{\lineheight{1.25}\smash{\begin{tabular}[t]{l}$0$\end{tabular}}}}%
    \put(0.20872489,0.28015899){\color[rgb]{0.21568627,0.63137255,0.21568627}\makebox(0,0)[lt]{\lineheight{1.25}\smash{\begin{tabular}[t]{l}$G$\end{tabular}}}}%
    \put(0,0){\includegraphics[width=\unitlength,page=2]{stein-color.pdf}}%
  \end{picture}%
\endgroup%
 \caption{A Stein handle diagram for $X$.}\label{fig:stein}
\end{figure}

For (d), let $X'$ be the 4-manifold shown on the right-hand side of Figure~\ref{fig:input-1}. It is obtained from $X$ by twisting along the Akbulut cork $C \subset X$, which can be described diagrammatically by exchanging the roles of the inner 0-framed 2-handle and the topmost dotted 1-handle curve. This corresponds to removing $\mathring{C} \subset X$ and regluing  by an involution of $\partial C$ that extends to a homeomorphism of $C$ (but not a diffeomorphism); see \cite{akbulut:cork,freedman}. 

To identify a torus that generates $H_2(X')$, observe that $X'$ is also obtained by adding a genus-1 handle to the Akbulut cork $C$, attached along the curve underlying the outer 2-handle on the right-hand side of Figure~\ref{fig:input-1}. At the cost of dragging along the other 2-handle curve, one can check that this outer curve is an unknot that can be isotoped away from the inner dotted 1-handle curve, hence, it bounds a smooth disk in $C \subset X'$. Capping off this disk with the core genus-1 surface in the genus-1 handle yields the desired torus.
\end{proof}
\begin{remark}\label{rem:extend}
    We can extend this to an infinite family of such exotic pairs $X_m,X_m'$ for $m \geq 0$ (all homotopy equivalent to $T^2$) such that $H_2(X_m')$ is generated by a smoothly embedded torus of self-intersection $0$, but a nonzero element of $H_2(X_m)$ cannot be represented by a smoothly embedded surface $S$ of genus less than $2+m$. This can be achieved by modifying the attaching curve $G$ by adding $m$ positive clasps across the 1-handle and $2m$ positive stabilizations as in \cite[Figure 8]{hp:embedding}. (The framing of $G$ is unchanged.) This has the effect of preserving $tb(G)$ while increasing $r(G)$ by $2m$, which in turn increases the absolute value of the evaluation of the Chern class $c_1(X_m)$ on the generator of $H_2(X_m)$ by $2m$. 
\end{remark}

\section{The reflection group trick}\label{sec:reflection}

We now recall some components of Davis' construction \cite{davis:aspherical,davis:book2}. In this section, $X$ will denote a smooth manifold (of any dimension) with a flag triangulation of its boundary. (Recall that a simplicial complex  is \emph{flag} if any finite subset of its vertices that are pairwise connected by edges spans a simplex. For example, the barycentric subdivision of any simplicial complex is flag.)

\subsection{The Coxeter group}
\label{subsec:coxeter}
Fix a flag triangulation $\mathcal{T}$ of $\partial X$.   Letting $V$ denote the set of vertices in $\mathcal{T}$, there is an associated  Coxeter system $(W,V)$ where $W$ is the right-angled Coxeter group with a generator $v$ of order two for each vertex $v \in V$ and a relation of the form $(vw)^2=1$ for each pair of vertices $v,w$ joined by an edge in $\mathcal{T}$. 

More generally, for any flag simplicial complex $\mathcal{T}$ (not necessarily a manifold), one can define an associated right-angled Coxeter group $W(\mathcal{T})$ in a similar way.

\subsection{The Davis complex} 
\label{subsec:davis space}
If $\mathcal{T}$ is a PL triangulation of $\partial X$, then the dual decomposition $\mathcal{T}'$ to $\mathcal{T}$ is a cell structure on $\partial X$. If $\mathcal{T}$ is a smooth triangulation of $\partial X$, then $X$ is naturally a smooth manifold with corners so that  each stratum in $\partial X$ is a dual cell to a simplex of $\mathcal{T}$. (The top-dimensional cells of $\mathcal{T}'$ are called the \emph{mirrors} of $X$. These are called \emph{panels} in \cite{davis:aspherical}.) In fact, $X$ is a \emph{manifold with faces} as defined in \cite[p.~304]{davis:aspherical} (meaning that each stratum of codimension $k$ is the intersection of precisely $k$ mirrors). Davis' construction yields a noncompact manifold $D(X)$ formed by gluing together a countable collection of copies of $X$ via reflections across the mirrors in the following  prescribed way.  
For each vertex $v$ of $\mathcal T$, let $X_v$ denote the closed cell in $\mathcal {T'}$ dual to $v$. For each point $x\in \partial X$, let $W_x$ denote the subgroup of $W$ generated by all $v\in V$ such that $x\in X_v$. For $x$ in the interior of $X$, the subgroup $W_x$ ($= W_\emptyset$) is defined to be $\{1\}$. Note that $W_x$ is isomorphic to a subgroup of $(\mathbb Z_2)^V$, in particular, each $W_x$ is a finite reflection group isomorphic to $(\mathbb Z_2)^k$ for some $k$. So, $X$ has a \emph{$W$-finite panel structure} (or mirror structure) as defined in  \cite[p.~304]{davis:aspherical}. This gives $X$ the structure of a smooth orbifold where the local group at $x$ is $W_x$. Define an equivalence relation $\sim$ on $W\times X$ by $(h,x)\sim(g,y)$ if and only if $x=y$ and $h^{-1}g\in W_x$. Let $D(X)$ denote the quotient space
$(W \times X)/\!\sim$.  (Note that $D(X)$ was referred to as $\mathfrak{U}(W,X)$ in~\cite{davis:aspherical, davis:book2}.)

Since $D(X)/W$ is the smooth orbifold $X$, the space $D(X)$ is a smooth manifold and the $W$-action is smooth; see \cite[pp.~321-322]{davis:aspherical} or \cite[Remark~10.1.11]{davis:book2}. The Coxeter group $W$ acts smoothly and properly on $D(X)$ with strict fundamental domain $X$, so that we can write $D(X)$ as a union
$$D(X) = \bigcup_{g \in W} g X.$$

Note that the set of generating reflections $V$ gives us a length function $\ell$ on $W$, the word length with respect to $V$. An orientation for $X$ induces an orientation for the manifold $D(X)$ by setting the orientation on each chamber $gX$ to be $(-1)^{\ell(g)}$ times the original orientation of $X$.

We next order the copies of $X$ in a convenient way. Choose any compatible ordering on $W$, $g_1, g_2\dots$ so that $i < j$ implies $\ell(g_i) \leq \ell(g_j)$. This defines an ordering of the chambers: $X_i := g_i X$. Using this ordering, one can write $D(X)$ as an increasing union of subspaces $P_n = \bigcup_{i=1}^n X_i$, where each $P_n$ is  codimension-zero in $D(X)$. (While we can view the subspaces $P_n$ as smooth manifolds with corners, it will suffice to consider them as PL manifolds with boundary.) It turns out that each $P_n$ is a boundary sum of the chambers $X_i$ with $i \leq n$, i.e., $P_n \cong X_1 \natural \cdots \natural \, X_n$. This decomposition is key to our arguments. We sketch its proof in Lemma~\ref{lem:boundary-sum} below. The main input for the proof is the following lemma. 

\begin{lemma}[{\cite[Remark 10.6]{davis:aspherical}}] 
\label{lem:disk}
The intersection of $P_n$ and $X_{n+1}$ is a PL codimension-zero disk $\Delta_n \subset \partial P_n$.
\end{lemma}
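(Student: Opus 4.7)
The plan is to identify $P_n \cap X_{n+1}$ explicitly in terms of the Coxeter-theoretic data, then recognize it as a PL disk via a regular-neighborhood argument.

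First, I would unwind the equivalence relation defining $D(X)$: for $j \le n$, the codimension-zero part of $X_{n+1} \cap X_j$ is a single mirror $g_{n+1} X_s$ precisely when $g_j^{-1} g_{n+1}$ equals a reflection $s \in V$. Since the chosen ordering on $W$ refines word length and multiplication by a generator changes $\ell$ by exactly $\pm 1$, the condition $j \le n$ with $g_j = g_{n+1} s$ translates to $\ell(g_{n+1} s) < \ell(g_{n+1})$. Setting
\[
S_{n+1} \;:=\; \{\, s \in V : \ell(g_{n+1} s) < \ell(g_{n+1}) \,\},
\]
this gives $P_n \cap X_{n+1} = g_{n+1} \cdot \bigcup_{s \in S_{n+1}} X_s$. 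One small verification is needed here: a point $x \in X$ could a priori be identified between $X_{n+1}$ and some $X_j$ via a longer element $w = g_j^{-1} g_{n+1} \in W_x$, but the standard descent property in Coxeter groups guarantees that $\ell(g_{n+1} w) < \ell(g_{n+1})$ for $w \in W_x$ forces some generator of $W_x$ to lie in $S_{n+1}$, so nothing new is contributed beyond the displayed union.

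Second, I would show that $S_{n+1}$ is the vertex set of a simplex $\sigma$ of $\mathcal{T}$. For this I would invoke the classical fact (see e.g.\ \cite[\S4.7]{davis:book}) that for any $w \in W$ the descent set $\{s \in V : \ell(ws) < \ell(w)\}$ generates a finite (spherical) parabolic subgroup of $W$. In the right-angled setting this forces the generators of $S_{n+1}$ to commute pairwise, and by the definition of $W = W(\mathcal{T})$ from the flag complex $\mathcal{T}$, a set of pairwise-commuting vertices is exactly the vertex set of a simplex of $\mathcal{T}$.

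Finally, I would translate back to the dual cell decomposition $\mathcal{T}'$ on $\partial X$: the union $\bigcup_{v \in \sigma^{(0)}} X_v$ is a PL regular neighborhood of the simplex $\sigma$ inside the PL 3-manifold $\partial X$ (since $\sigma$ is collapsible), hence a PL 3-disk. Translating by $g_{n+1}$ gives the required $\Delta_n$, and its inclusion in $\partial P_n$ is immediate: each mirror $g_{n+1} X_s$ with $s \in S_{n+1}$ separates a tile $X_j \subseteq P_n$ from $X_{n+1} \not\subseteq P_n$. The main obstacle I anticipate is the second step, where the spherical-descent theorem for Coxeter groups, the flag hypothesis on $\mathcal{T}$, and the right-angled structure of $W$ all have to be combined; the other two steps are then respectively an unfolding of the quotient definition and a standard PL regular-neighborhood statement.
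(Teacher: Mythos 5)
The paper does not give its own proof of this lemma; it cites \cite[Remark 10.6]{davis:aspherical} and moves on. Your argument correctly reconstructs the standard reasoning behind that citation, and the three-step outline is sound: (1) the intersection $P_n\cap X_{n+1}$ is the union of mirror panels $g_{n+1}X_s$ over the right descent set $S_{n+1}$ of $g_{n+1}$; (2) the descent set generates a finite parabolic, which in the right-angled, flag setting means $S_{n+1}$ is a set of pairwise-commuting generators and hence spans a simplex $\sigma$ of $\mathcal{T}$; (3) the union of dual cells $\bigcup_{v\in\sigma^{(0)}}X_v$ is a PL $3$-ball in $\partial X$, and $\Delta_n$ is its translate.

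Two small points worth tightening. In step (1) you write $w=g_j^{-1}g_{n+1}$ but then consider $g_{n+1}w$; you want $w=g_{n+1}^{-1}g_j$ so that $g_{n+1}w=g_j$. Also, the ordering hypothesis with $j\le n$ gives only $\ell(g_j)\le\ell(g_{n+1})$, not a strict inequality, so a sentence is needed to rule out the equal-length case: if $g_{n+1}$ had no descent in $W_x$ it would be the unique minimal-length representative of the coset $g_{n+1}W_x$, forcing every other coset element --- including $g_j\ne g_{n+1}$ --- to be strictly longer, a contradiction. In step (3), the assertion that $\bigcup_{v\in\sigma^{(0)}}X_v$ is a regular neighborhood of $\sigma$ is correct but deserves a word: this union is exactly the simplicial neighborhood $N(\sigma,\mathcal{T}')$ in the first barycentric subdivision, and because a simplex together with its faces is automatically a \emph{full} subcomplex of $\mathcal{T}$, the derived-neighborhood theorem (\cite[Ch.~3]{rourke-sanderson}) applies at the first derived stage; collapsibility of $\sigma$ then gives a PL ball, and the ball clearly lies in $\partial P_n$ for the reason you state. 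With these clarifications the proof is complete and matches the expected argument.
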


\begin{proof}[Sketch of Proof]
The proof of Lemma~\ref{lem:disk} relies on the choice of ordering on $W$. The intersection $P_n\cap X_{n+1}$ is the union of  all mirrors of $X_{n+1}$ such that the reflected image of $X_{n+1}$ across  the mirror  lies in $P_n$. Recall that the mirrors of $X$  correspond to the vertices of the simplicial complex $\mathcal T$.  If $X_{n+1}=gX$, where $g=g_{n+1}$, then the set of such mirrors is indexed by $\mathrm{In}(g):=\{v\in V\mid \ell(gv)<\ell(g)\}$ (cf.~\cite[Lemma 8.1.1]{davis:book2}) -- this is a consequence of the choice of ordering on $W$. The key fact is that for any $g\in W$ the subgroup of $W$ generated by $\mathrm{In}(g)$ is finite (cf.~\cite[Lemma 4.7.2]{davis:book2}). Since the mirror structure is $W$-finite, $\mathrm{In}(g)$ is the vertex set  of a subcomplex of $\mathcal T$ that is isomorphic to a complete graph. Since $\mathcal T$ is a flag complex, $\mathrm{In}(g)$ actually spans a simplex of $\mathcal T$. (This is the only place where the hypothesis that $\mathcal T$ is a flag complex is used.) There is a dual cell to this simplex in $\mathcal T'$. Since $P_n\cap X_{n+1}$ is a regular neighborhood in $\partial X_{n+1}$ of this dual cell, it is a disk $\Delta_n$.
\end{proof}

\begin{lemma}\label{lem:boundary-sum}
For each $n$, there exists a PL homeomorphism $P_n \to X_1 \natural \cdots \natural X_n$.
\end{lemma}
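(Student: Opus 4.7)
The plan is to proceed by induction on $n$, with trivial base case $P_1 = X_1$.

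For the inductive step, suppose $\phi_{n-1}\colon P_{n-1} \to X_1 \natural \cdots \natural X_{n-1}$ is a PL homeomorphism. By Lemma~\ref{lem:disk} applied at stage $n-1$, the intersection $\Delta_{n-1} := P_{n-1} \cap X_n$ is a codimension-zero PL disk in both $\partial P_{n-1}$ and $\partial X_n$. Hence $P_n = P_{n-1} \cup_{\Delta_{n-1}} X_n$ is, by definition, a PL boundary sum $P_{n-1} \natural X_n$ performed along the attaching disk $\Delta_{n-1}$. Combined with the inductive hypothesis, this already identifies $P_n$ as a boundary sum of the tiles $X_1, \ldots, X_n$, though along a possibly nonstandard system of attaching disks.

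The key subtlety, flagged in the paragraph preceding the lemma, is that $\Delta_{n-1}$ need not lie in the part of $\partial P_{n-1}$ inherited from $\partial X_{n-1}$ under the identification $\phi_{n-1}$; it could in principle meet several of the earlier boundary pieces. To finish the induction, I would invoke the standard fact that the PL boundary sum of two PL manifolds with connected boundary is independent, up to PL homeomorphism, of the choice of codimension-zero attaching disks. This rests in turn on the fact that any two codimension-zero PL $3$-disks in a connected PL $3$-manifold are related by an ambient PL isotopy. Connectedness of the relevant boundaries follows inductively: $\partial X$ is connected (visible from the Kirby diagram in Figure~\ref{fig:input-1}, since $X$ is built from the connected contractible manifold $C$ by attaching a single handle along a connected curve), and the boundary sum of two manifolds with connected boundary again has connected boundary. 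Thus I may post-compose $\phi_{n-1}$ with an ambient PL self-homeomorphism of $P_{n-1}$ that carries $\Delta_{n-1}$ into the region of $\partial P_{n-1}$ identified with a disk in $\partial X_{n-1}$; the gluing with $X_n$ then realizes the abstract boundary sum $X_1 \natural \cdots \natural X_n$ in the standard position, completing the induction.

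The main obstacle is orientation bookkeeping: as emphasized in the text, adjacent tiles carry opposite orientations, so each summand in $X_1 \natural \cdots \natural X_n$ is individually a copy of $X$ or $-X$. These orientations are built into both sides of the desired PL homeomorphism (into the reflection-group gluing on the left, and into the convention for $\natural$ on the right), and the disk-independence of boundary sum is insensitive to them, so this bookkeeping is cosmetic rather than substantive. I expect the only mildly delicate point to be the verification that the ambient isotopy moving $\Delta_{n-1}$ into the last summand can be arranged in the PL category and extended across the identification $\phi_{n-1}$, but this is standard for connected PL $3$-manifolds.
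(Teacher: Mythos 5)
Your argument follows essentially the same route as the paper's: induct on $n$, use Lemma~\ref{lem:disk} to identify $P_n = P_{n-1} \cup_{\Delta_{n-1}} X_n$, and then apply the PL disk theorem (Palais' theorem in the PL category, cited as \cite[Theorem 3.34]{rourke-sanderson} in the paper) to ambiently isotope the attaching disk into the standard position on the last summand, extending the boundary isotopy across a collar. The only cosmetic slip is the phrase ``post-compose $\phi_{n-1}$ with an ambient PL self-homeomorphism of $P_{n-1}$'' --- you mean either pre-compose with a self-homeomorphism of $P_{n-1}$, or post-compose with one of the target $X_1 \natural \cdots \natural X_{n-1}$; the paper chooses the latter --- but the underlying idea is identical and correct.
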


\begin{proof}
We argue inductively, with the lemma being trivially true for $n=1$. Next, suppose that for a given $n$ there exists a PL homeomorphism $$f: P_{n} \to X_1 \natural \cdots \natural X_{n}.$$
By construction, $P_{n+1}= P_n \cup_{\Delta_n} X_{n+1}$, where $\Delta_n \subset \partial P_n$ is the disk $P_n \cap X_{n+1}$ of Lemma~\ref{lem:disk}. It follows that $f$ induces a PL homeomorphism
$$g:  P_{n+1} \to \left(X_1 \natural \cdots \natural X_n\right) \cup_{f(\Delta_n)} X_{n+1},$$
where $f(\Delta_n) \subset \partial (X_1 \natural \cdots \natural \, X_n)$ is identified with the disk in $X_{n+1}$ that was previously identified with $\Delta_n \subset \partial P_n$. 

Now choose a small codimension-zero disk $\Delta'_n \subset \partial X_n$ that lies away from the boundary-summing region in $X_1 \natural \cdots \natural X_{n}$. By the PL version of Palais' disk theorem \cite[Theorem 3.34]{rourke-sanderson}, there is an isotopy of $\partial (X_1 \natural \cdots \natural X_n)$  carrying $f(\Delta_n)$ to $\Delta_n'$, and such an isotopy extends to an isotopy of $X_1 \natural \cdots \natural X_n$ supported near a collar neighborhood of its boundary  (cf.~\cite[Theorem 3.22]{rourke-sanderson}). 
This defines a PL homeomorphism
$$h:  \left(X_1 \natural \cdots \natural X_n\right) \cup_{f(\Delta_n)} X_{n+1} \longrightarrow  \left(X_1 \natural \cdots \natural X_n\right) \cup_{\Delta'_n} X_{n+1},$$
and the latter space is naturally identified with the boundary sum $X_1 \natural \cdots \natural X_{n+1}$.  Composing $g$ and $h$ yields the desired PL homeomorphism from $P_{n+1}$ to $X_1 \natural \cdots \natural X_{n+1}$, completing the inductive argument.
\end{proof}

The description in Lemma~\ref{lem:boundary-sum} introduces two important subtleties. First, when the union $P_n = \bigcup_{i=1}^n X_i$ is expressed as a boundary sum,  we must allow each summand $X_i$ to be diffeomorphic to either $X$ or $-X$. 

The second subtlety is that the  summing region $\Delta_n$ in $P_{n} = P_{n-1} \natural \, X_{n}$ is not confined to the boundary of $X_{n-1} \subset P_{n-1}$, as is the case in the usual construction of $X_1 \natural \cdots \natural X_n$. Therefore, the inductive identification of $P_n$ with $X_1 \natural \cdots \natural \, X_n$ is slightly unnatural.  (This is evidenced by the fact that any given chamber $X_i $ eventually lies strictly in the interior of each $P_n$ for $n \gg i$, yet the $i^{\text{th}}$ summand  of $X_1 \natural \cdots \natural \, X_n$ does not lie in the interior of $X_1 \natural \cdots \natural \, X_n$.)


We record one more simple consequence of Lemma~\ref{lem:disk}.

\begin{lemma}\label{lem:inject}
Each inclusion $X_n \hookrightarrow D(X)$ induces an injection on homology.
\end{lemma}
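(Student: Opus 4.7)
My plan is to produce a continuous retraction $r_n : D(X) \to X_n$; once this exists, the tile $X_n$ is literally a retract of $D(X)$ and the inclusion $X_n \hookrightarrow D(X)$ will automatically be split injective on every homology group.

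The retraction comes from the fact that the quotient of $D(X)$ by the Coxeter group $W$ is precisely the fundamental domain $X$. The projection $W \times X \to X$ onto the second factor respects the equivalence relation defining $D(X)$ (if $(g,x) \sim (h,y)$ then $x=y$), so it descends to a continuous map $q : D(X) \to X$, $[g,x] \mapsto x$. Restricted to the tile $X_n = \{[g_n,x] : x \in X\}$, $q$ is a homeomorphism onto $X$ with continuous inverse $\sigma_n : X \to X_n$, $x \mapsto [g_n,x]$. Setting $r_n := \sigma_n \circ q$, any $y = [g_n,x] \in X_n$ satisfies $r_n(y) = [g_n,x] = y$, so $r_n$ is the desired continuous retraction, and $(r_n)_*$ provides a left inverse to $(i_n)_* : H_*(X_n) \to H_*(D(X))$.

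Alternatively, Lemma~\ref{lem:boundary-sum} yields an immediate argument: since $D(X) = \bigcup_m P_m$ is an ascending union of compact subspaces and singular homology commutes with such unions, $H_*(D(X)) = \varinjlim_m H_*(P_m)$, so it suffices to show $H_*(X_n) \to H_*(P_m)$ is injective for each $m \geq n$. Because each gluing disk is contractible, $P_m \cong X_1 \natural \cdots \natural X_m$ is homotopy equivalent to the wedge $X_1 \vee \cdots \vee X_m$, whose reduced homology splits as $\bigoplus_i \tilde H_*(X_i)$ with $X_n$ a direct summand; passing to the limit completes the argument. I do not anticipate a serious obstacle; the only verifications needed in the first approach are the continuity of $q$ and $\sigma_n$, both immediate from the construction of $D(X)$.
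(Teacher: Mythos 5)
Both of your arguments are correct, and your first one is a genuinely different — and simpler — route than the paper's. The paper proves the lemma by applying Mayer--Vietoris to the decomposition $P_n = P_{n-1} \cup X_n$ (whose intersection is the disk $\Delta_{n-1}$ of Lemma~\ref{lem:disk}), deducing that $H_*(X_n) \to H_*(P_m)$ is injective for all $m \geq n$, and then passing to the colimit exactly as you describe in your second paragraph; so your ``alternative'' argument via Lemma~\ref{lem:boundary-sum} and the wedge decomposition is essentially the paper's proof phrased slightly differently. Your retraction argument, by contrast, bypasses Lemma~\ref{lem:disk} and Lemma~\ref{lem:boundary-sum} entirely: the map $[g,x] \mapsto [g_n,x]$ is well-defined and continuous directly from the quotient definition of $D(X)$ (since the equivalence relation on $W \times X$ never identifies points with different second coordinates), and it retracts $D(X)$ onto the tile $X_n$. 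This gives the stronger conclusion that $(i_n)_*$ is split injective, with a cleaner proof that uses none of the combinatorial structure of the chamber decomposition. The only thing the paper's approach buys in exchange is that the Mayer--Vietoris/boundary-sum machinery is needed anyway elsewhere (e.g.\ in the proof of Theorem~\ref{thm:relative} and in the asphericity discussion), so the authors get Lemma~\ref{lem:inject} for free from tools already on the table; your retraction argument, while self-contained and arguably preferable in isolation, would not reduce the total amount of work in the paper.
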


\begin{proof}
First consider the inclusions $X_n \hookrightarrow P_n$ and $P_{n-1} \hookrightarrow P_n$. For each $n$, observe that these inclusions induce  injections on homology by applying  a Mayer-Vietoris argument to the decomposition $P_n = P_{n-1} \cup X_n$ and by using the fact that the intersection $P_{n-1} \cap X_n$ is a disk. It follows that, for all $m>n$, the composition of inclusions $$X_n \hookrightarrow P_n \hookrightarrow P_{m}$$
induces an injection on homology. Hence, the inclusion $X_n \hookrightarrow D(X)$ induces an injection on homology, since a class $\alpha \in H_*(X_n)$ that becomes null-homologous in $H_*(D(X))$ must become null-homologous in $H_*(P_m)$ for some finite $m>n$.
\end{proof}
 
\subsection{A compact quotient} 
\label{subsec:quotient}
Finally, we produce the closed manifold $Q(X)$. Any finitely generated  Coxeter group $W$  contains a finite index, torsion-free subgroup $W_0$ (see \cite[Corollary 6.12.12]{davis:book2}). Set $Q(X)= D(X)/W_0$. As the action of $W_0$ on $D(X)$ is smooth (as discussed earlier) and free, the projection $D(X)\to Q(X)$ is a covering map and hence $Q(X)$ is a smooth manifold. For example, given that the Coxeter group $W$ is right-angled, one can take $W_0$ to be its commutator subgroup. (The commutator subgroup is the kernel of the natural projection to the abelianization, $W\to (\mathbb Z_2)^V$. This map is injective on each finite subgroup of $W$, so its kernel is torsion-free.)

When $X$ is aspherical (as it is in our case), it is easy to see that $D(X)$ and $Q(X)$ are also aspherical by using Lemma~\ref{lem:boundary-sum}. For $D(X)$, consider a map  $f: S^k \to D(X)$ with $k \geq 2$. Its image is compact, hence lies in some $P_n$. Applying Lemma~\ref{lem:boundary-sum}, we see that $P_n$ is homotopy equivalent to a wedge sum of $n$ copies of $X$. A wedge sum of aspherical spaces is aspherical; hence, $f(S^k)$ is nullhomotopic in $P_n$ and therefore, in $D(X)$. We conclude that $D(X)$ is aspherical, hence, so is $Q(X)$, because the homotopy groups $\pi_k$ of $Q(X)$ and its cover $D(X)$ are isomorphic for $k \geq 2$.

\section{An exotic homeomorphism}\label{sec:exotic}

Let $X$ and $X'$ denote the exotic 4-manifolds from \S\ref{sec:input}. Note that they differ by a cork twist in the interior, so their boundaries are identified, and so we choose the same triangulation for both $\partial X$ and $\partial X'$. Applying the construction from \S\ref{sec:reflection}, we obtain an associated pair of closed, aspherical 4-manifolds $Q(X)$ and $Q(X')$ with covers $D(X)$ and $D(X')$. As a step towards proving Theorem~\ref{thm:exotic-mfld}, we first prove the following:

\begin{theorem}\label{thm:relative}
There is a homeomorphism $Q(X)\to Q(X')$ that is not homotopic to any diffeomorphism.
\end{theorem}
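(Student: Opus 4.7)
The plan is to construct the homeomorphism by equivariantly assembling a Freedman-type homeomorphism $X\to X'$ over the reflection group, and then to obstruct any homotopic diffeomorphism by comparing minimal genera of smoothly embedded surface representatives in the $W_0$-covers. Concretely, since $X'$ is obtained from $X$ by removing $\interior(C)$ and regluing by an involution $\tau$ of $\partial C$, Freedman's cork theorem provides a homeomorphism $h\colon C\to C$ with $h|_{\partial C}=\tau$, and combining $h$ on $C$ with the identity on $X\setminus\interior(C)$ produces a homeomorphism $\psi\colon X\to X'$ that is the identity in a collar of $\partial X$, and in particular preserves the dual cell structure $\mathcal T'$. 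Applying $\psi$ tile by tile gives a $W$-equivariant homeomorphism $\tilde\psi\colon D(X)\to D(X')$, which descends to a homeomorphism $\bar\psi\colon Q(X)\to Q(X')$.

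Now suppose $\phi\colon Q(X)\to Q(X')$ is a diffeomorphism homotopic to $\bar\psi$. Then $\phi_*=\bar\psi_*$ on $\pi_1$, and because $\tilde\psi$ is an explicit lift of $\bar\psi$ to the $W_0$-cover, $\bar\psi_*$ carries $\pi_1(D(X))\subset\pi_1(Q(X))$ onto $\pi_1(D(X'))\subset\pi_1(Q(X'))$. Hence so does $\phi_*$, and the lifting criterion produces a diffeomorphism $\tilde\phi\colon D(X)\to D(X')$. By Proposition~\ref{prop:properties}(d), the base tile $X'_e\subset D(X')$ contains a smoothly embedded torus $T$ generating $H_2(X'_e)\cong\mathbb Z$, and Lemma~\ref{lem:inject} ensures $[T]\neq 0$ in $H_2(D(X'))$; thus $\tilde\phi^{-1}(T)$ is a smoothly embedded torus in $D(X)$ representing the nonzero class $\tilde\phi_*^{-1}[T]$.

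To obtain a contradiction, I will show that every smoothly embedded closed surface $\Sigma\subset D(X)$ of genus at most one is null-homologous. Any such $\Sigma$ sits in some $P_n\cong X_1\natural\cdots\natural X_n$ by compactness. Making $\Sigma$ transverse to the interior $3$-disks $\Delta_j$ along which adjacent tiles are identified, $\Sigma\cap\Delta_j$ is a disjoint union of circles in the interior of each $\Delta_j$. An innermost-disk surgery in each $\Delta_j$---using that $\Delta_j$ is a $3$-ball, and pushing the surgery disks slightly into a neighboring tile so that they remain embedded and mutually disjoint in dimension four---preserves $[\Sigma]$ and does not increase the genus. Iterating until $\Sigma$ is disjoint from every $\Delta_j$ localizes $\Sigma$ into tiles, producing smoothly embedded $\Sigma_i\subset X_i$ with $[\Sigma]=\sum_i[\Sigma_i]$ in $H_2(P_n)=\bigoplus_i H_2(X_i)$ and $\sum_i g(\Sigma_i)\leq g(\Sigma)\leq 1$. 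Since each tile is diffeomorphic to $X$ or $-X$ and the adjunction-based bound of Proposition~\ref{prop:properties}(c) is orientation-symmetric, every $\Sigma_i$ of genus at most one must be null-homologous in its tile. Hence $[\Sigma]=0$ in $H_2(P_n)$, and thus in $H_2(D(X))$, contradicting that $\tilde\phi^{-1}(T)$ is essential.

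The main obstacle is the genus-controlled surgery decomposition across the entire tree of boundary sums in $P_n$: one must verify that the surgery disks in distinct summing $3$-disks can be kept mutually disjoint using four-dimensional transversality, that the direct-sum decomposition of $H_2(P_n)$ is preserved throughout, and that the orientation reversals between adjacent tiles do not spoil the adjunction bound. These points are organizational rather than conceptual, but need to be executed carefully for Proposition~\ref{prop:properties}(c) to apply tile by tile.
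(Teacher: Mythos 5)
Your construction of the homeomorphism and the reduction to ruling out a diffeomorphism $D(X)\to D(X')$ are both sound and essentially match the paper: the $W$-equivariant cork-twist homeomorphism descends to $Q(X)\to Q(X')$, and any smooth map homotopic to it would lift to the $W_0$-cover (the paper gets there via the homotopy lifting property; your route via $\phi_*=\bar\psi_*$ and the lifting criterion is equivalent). You are also right that the heart of the matter is the genus bound: $D(X')$ contains an essential smooth torus while $D(X)$ does not.

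The gap is in your proof of that genus bound, and it is conceptual rather than organizational. Your plan is to make $\Sigma$ transverse to the summing $3$-disks $\Delta_j$ and then perform ``innermost-disk surgery'' to split $\Sigma$ into tile-by-tile pieces with $\sum_i g(\Sigma_i)\leq g(\Sigma)$. But the circles $\Sigma\cap\Delta_j$ live in the interior of a $3$-ball inside a $4$-manifold, and in dimension four a generic surface cannot be isotoped off a $3$-ball (codimensions do not add up), so these circles can be knotted in $\Delta_j$. A knotted circle bounds no embedded disk in $\Delta_j$, so the claimed compression disk need not exist; ``pushing the surgery disk slightly into a neighboring tile'' does not produce one. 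In fact genus behavior under such splittings in dimension four is exactly where naive innermost-disk arguments are known to break down, which is why the minimal-genus problem in connected sums is subtle. You need a different mechanism to localize the homology class without losing genus control, and your argument doesn't supply one.

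A telling symptom is that you never invoke Proposition~\ref{prop:properties}(b), that $X$ embeds smoothly in $B^4$. The paper lists this property precisely because it powers the correct argument: since $X$ embeds in $B^4$, one may attach $2$- and $3$-handles to each summand $X_2,\dots,X_n$ of $X_1\natural\cdots\natural X_n$ to fill it up to a $4$-ball, producing a smooth embedding $X_1\natural\cdots\natural X_n\hookrightarrow X_1\natural B^4\natural\cdots\natural B^4\cong X_1$. This carries the original surface $S'$ to a smoothly embedded surface in a single copy of $X_1$ (or $-X_1$) that remains homologically essential, because the embedding realizes the projection $H_2(X_1\natural\cdots\natural X_n)\to H_2(X_1)$. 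Then Proposition~\ref{prop:properties}(c) gives $g\geq 2$ directly, with no need to decompose the surface at all. I would encourage you to replace the surgery step with this embedding argument; as written, the key inequality $\sum_i g(\Sigma_i)\leq g(\Sigma)$ is not established.
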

 
 \begin{proof} 
Recall that $X'$ is obtained from $X$ by removing the interior of the Akbulut cork $C \subset \mathring{X}$ and regluing $C$ with a twist. The homeomorphism $X \to X'$ constructed in the proof of Proposition~\ref{prop:properties}(d) can be viewed as being the identity away from $\mathring{C}$, where the definitions of $X$ and $X'$ agree.
This induces a $W$-equivariant homeomorphism $\tilde f: D(X) \to D(X')$, which descends to a homeomorphism $f: Q(X) \to Q(X')$.
 
We claim that there is no diffeomorphism $D(X)\to D(X')$, which will imply that $f: Q(X) \to Q(X')$ is not homotopic to any  diffeomorphism. (By the homotopy lifting property, such a homotopy would lift to a homotopy from $\tilde f: D(X) \to D(X')$ to a diffeomorphism $D(X)\to D(X')$.) To prove this, recall from Proposition~\ref{prop:properties} that $X'$ contains a smoothly embedded torus generating $H_2(X')$. In particular, since each inclusion-induced map $H_2(X_i')\to H_2(D(X'))$ is injective by Lemma~\ref{lem:inject}, we see that $D(X')$ contains smoothly embedded, homologically essential tori.

In contrast, we claim that any smoothly embedded, homologically essential surface $S$ in $D(X)$ has genus at least two. Since $S$ is compact, it must be contained in one of the compact subspaces $P_n$ in the exhaustion of $D(X)$. By Lemma~\ref{lem:boundary-sum}, there is a PL homeomorphism $\varphi: P_n \to X_1 \natural \cdots \natural X_n$. The surface $\varphi(S)$ is a locally flat, PL codimension-2 submanifold of the smooth manifold $X_1 \natural \cdots \natural X_n$, hence, it is isotopic to a smoothly embedded surface $S'$ by Wall \cite{wall:codim2} (also see the proof of \cite[Lemma~A.3]{hom-levine-lidman}).

Since $[S]$ is nonzero in $H_2(D(X))$, it is nonzero in $H_2(P_n)$, hence its image $\varphi_*[S]=[S']$ is nonzero in $H_2(X_1 \natural \cdots \natural X_n)$. Note that $H_2(X_1 \natural \cdots \natural X_n)$ splits as a direct sum of $H_2(X_i)$, giving a projection $H_2(X_1 \natural \cdots \natural X_n)\to H_2(X_i)$ for each $i$. It follows that $[S']$ must project to a nonzero element in the homology $H_2(X_k)$ of at least one summand $X_k$ in  $X_1 \natural \cdots \natural X_n$.  By Proposition~\ref{prop:properties}, we may attach 2- and 3-handles to all the other summands $X_i$ for $i \neq k$ (attached away from the boundary-summing regions) to turn them into 4-balls, giving an embedding  $$X_1 \natural \cdots \natural X_k \natural\cdots \natural X_n \ \hookrightarrow \  B^4 \natural \cdots\natural X_k \natural  \cdots \natural B^4,$$
where the target is diffeomorphic to $X_k$. This embedding of $X_1 \natural \cdots \natural X_n$ into $X_k$ induces the projection from $H_2(X_1 \natural \cdots \natural X_n)$ to $H_2(X_k)$, so it carries $S'$ to a smoothly embedded surface in $X_k$ that is still homologically essential. By Proposition~\ref{prop:properties}, it follows that $S'$ has genus at least two, hence so does the original surface $S$.
\end{proof}

It is important to note that the argument above proves an a priori stronger statement than that of Theorem~\ref{thm:relative}, which we record here for use in the final argument for Theorem~\ref{thm:exotic-mfld}.
\begin{theorem}\label{thm:lifttoD}
There is no diffeomorphism $Q(X)\to Q(X')$ that lifts to a diffeomorphism $D(X)\to D(X')$.
\end{theorem}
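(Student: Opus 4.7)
The plan is to observe that the argument already given for Theorem~\ref{thm:relative} is entirely insensitive to \emph{which} diffeomorphism $D(X)\to D(X')$ one considers; the proof actually established that no such diffeomorphism exists at all. Once this is isolated as a standalone statement, Theorem~\ref{thm:lifttoD} follows immediately by contraposition. So the proposed proof has essentially no new content beyond re-packaging what is already present.

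In more detail, I would first extract from the proof of Theorem~\ref{thm:relative} the following genus-comparison statement: \emph{$D(X')$ contains a smoothly embedded, homologically essential torus, while every smoothly embedded, homologically essential surface in $D(X)$ has genus at least two.} The first half comes from Proposition~\ref{prop:properties}(d), since the embedded torus in $X'$ sits inside some tile $X_i'\subset D(X')$ and remains homologically essential there by Lemma~\ref{lem:inject}. The second half is the heart of the argument already presented: compactness of $S$ places it in some $P_n$, Lemma~\ref{lem:boundary-sum} identifies $P_n$ with a boundary sum $X_1\natural\cdots\natural X_n$, Wall's result upgrades the locally flat PL surface to a smooth one, and then Proposition~\ref{prop:properties}(c) combined with filling the other summands to $4$-balls forces the genus bound. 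These two properties are diffeomorphism invariants, so they obstruct any diffeomorphism $D(X)\to D(X')$.

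Now suppose, for contradiction, that some diffeomorphism $\phi\colon Q(X)\to Q(X')$ lifts to a diffeomorphism $\tilde\phi\colon D(X)\to D(X')$. The existence of $\tilde\phi$ contradicts the preceding paragraph, so no such $\phi$ can exist. This proves Theorem~\ref{thm:lifttoD}.

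There is no real obstacle here; the only point worth emphasizing is that the genus comparison was never tied to a particular diffeomorphism or to an equivariance assumption with respect to the $W$-action. It is purely an intrinsic smooth invariant of the four-manifolds $D(X)$ and $D(X')$ (the minimum genus of a smoothly embedded homologically essential surface), and so it rules out arbitrary diffeomorphisms between them, in particular any lift of a would-be diffeomorphism between the quotients.
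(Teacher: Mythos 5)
Your proposal is correct and matches the paper's reasoning exactly: the paper itself notes that the proof of Theorem~\ref{thm:relative} establishes the stronger fact that \emph{no} diffeomorphism $D(X)\to D(X')$ exists (via the minimum-genus comparison, which is an intrinsic smooth invariant), and Theorem~\ref{thm:lifttoD} is simply recorded as an immediate consequence. No further commentary is needed.
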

 
\begin{remark}\label{rem:unicover}%
Let $\xtilde$ (resp.~$\xtilde'$) denote the universal cover of $X$ (resp.~$X'$).
The triangulation of $\partial X$ lifts to a triangulation $\tilde{\mathcal{T}}$ of $\partial \xtilde$. Use this to define a right-angled Coxeter group $\tilde{W}$ and a corresponding Davis complex $D(\xtilde)$. Then $D(\xtilde)$ is the universal cover of $D(X)$. Similarly, we get  $\xtilde'$ and the corresponding universal cover $D(\xtilde')$ of $D(X')$. 
We conjecture that $D(\xtilde)$ and $D(\xtilde')$ are not simply connected at infinity and hence, that neither is homeomorphic to $\R^4$. (In particular, neither is an exotic $\R^4$.) An interesting open question is whether the open contractible manifolds $D(\xtilde')$ and $D(\xtilde)$ are diffeomorphic.
\end{remark}

\section{Characteristic subgroups}\label{sec:characteristic}

 To complete the proof of Theorem~\ref{thm:exotic-mfld} (in light of Theorems~\ref{thm:relative} and \ref{thm:lifttoD}), it suffices to show that \emph{any} potential diffeomorphism $Q(X)\to Q(X')$ would lift to a diffeomorphism $D(X)\to D(X')$. This lifting problem can be recast in terms of the behavior of the subgroup $\pi_1(D(X)) \leq \pi_1(Q(X))$  under automorphisms of $\pi_1(Q(X))$.

\subsection{Flag-no-square complexes and characteristic subgroups} \label{subsec:characteristic} A \emph{cycle} in a simplicial complex $\mathcal{T}$ is a subcomplex homeomorphic to the circle $\mathbb S^1$; its \emph{length} is the number of edges in the cycle. A \emph{diagonal} of a cycle is an edge connecting
any two nonconsecutive vertices in this cycle.
A simplicial complex $\mathcal{T}$ is said to satisfy the \emph{flag-no-square condition} if $\mathcal{T}$ is a flag complex
and if any cycle of length 4 in $\mathcal{T}$ has a diagonal.

\begin{proposition}\cite[Proposition 2.13]{przytycki2009flag}
Let $\mathcal{T}$ be a 3-dimensional simplicial complex. Then it admits a subdivision that is flag-no-square.
\end{proposition}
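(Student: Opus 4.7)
My plan is to produce the desired subdivision in two layers: first establish the flag condition, then kill any remaining empty 4-cycles by a further local refinement.

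For the first layer, I would pass to the barycentric subdivision $\mathcal{T}'$ of $\mathcal{T}$, using the standard fact (already noted in the paper) that the barycentric subdivision of any simplicial complex is flag. The reason is that the vertices of $\mathcal{T}'$ are barycenters $\hat{\sigma}$ of simplices $\sigma$ of $\mathcal{T}$, an edge of $\mathcal{T}'$ joins $\hat{\sigma}$ to $\hat{\tau}$ exactly when $\sigma\subsetneq\tau$ or $\tau\subsetneq\sigma$, and any clique in the $1$-skeleton of $\mathcal{T}'$ therefore determines a chain of simplices of $\mathcal{T}$, which automatically spans a simplex of $\mathcal{T}'$.

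For the second layer, I would classify the possible empty $4$-cycles in $\mathcal{T}'$. Such a cycle is an ordered tuple $\hat{\sigma}_1,\hat{\sigma}_2,\hat{\sigma}_3,\hat{\sigma}_4$ in which consecutive pairs are comparable under inclusion but no other pair is. Since $\dim\mathcal{T}\le 3$, the dimensions of the $\sigma_i$ lie in $\{0,1,2,3\}$ and the cyclic dimension profiles compatible with alternating strict inclusions form a short finite list. Each such profile pins down an explicit local configuration inside a single closed simplex of $\mathcal{T}$ (for example, two $2$-simplices of a tetrahedron sharing a common edge and a common cofacet, or a vertex and a tetrahedron connected via two intermediate faces). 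For each configuration I would prescribe a small local refinement of $\mathcal{T}'$, for example inserting the barycenter of an appropriate edge of the cycle or performing a second barycentric subdivision in the supporting simplex, so that the new vertex lies on the interior of one edge of the $4$-cycle and either shortens it or supplies a diagonal.

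To carry this out globally I would do the refinement one simplex of $\mathcal{T}$ at a time (or use a uniform second subdivision of $\mathcal{T}'$), choosing the local models so that they agree on boundaries of shared simplices. One then checks that the resulting complex is still flag (the chain characterization of edges is preserved under the added barycenters) and that every $4$-cycle has a diagonal: a $4$-cycle of the refined complex either comes from one in $\mathcal{T}'$, in which case it has been given a diagonal by the local refinement, or passes through one of the newly inserted vertices, in which case the corresponding chain-of-simplices description forces a diagonal between two of its vertices.

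The main obstacle I expect is the combinatorial bookkeeping in the classification of empty $4$-cycles in $\mathcal{T}'$: several dimension profiles occur in dimension $3$, and the local subdivisions fixing them must be compatible along shared faces of $\mathcal{T}$ without creating new empty $4$-cycles at their interfaces. The hardest case is when all four vertices of the cycle correspond to simplices of intermediate dimension ($1$ or $2$), since then the offending $4$-cycle is not confined to a single closed simplex of $\mathcal{T}$ and the refinement must be designed to account for the cycle crossing a codimension-one face.
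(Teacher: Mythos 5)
The paper does not prove this proposition; it is quoted from Przytycki--\'Swi\k{a}tkowski, where the actual argument is substantially more involved than your sketch indicates. Your first layer (barycentric subdivision is flag) is correct, but the second layer contains a false claim and a non-working suggestion. The claim that an empty $4$-cycle in $\mathcal{T}'$ ``pins down an explicit local configuration inside a single closed simplex of $\mathcal{T}$'' is wrong already in the easiest dimension profile: take two $2$-simplices $abc$ and $abd$ of $\mathcal{T}$ meeting along the edge $ab$. Then $\hat a - \widehat{abc} - \hat b - \widehat{abd} - \hat a$ is an empty $4$-cycle in $\mathcal{T}'$ (the pairs $\{a,b\}$ and $\{abc, abd\}$ are incomparable), yet it lies in no single closed simplex of $\mathcal{T}$. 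This is the generic situation, not the exceptional one you relegate to the last paragraph. Similarly, your fallback of ``a uniform second subdivision of $\mathcal{T}'$'' does not work: whenever a flag complex has two $2$-simplices sharing an edge, its barycentric subdivision again has two $2$-simplices sharing an edge, so the same empty $4$-cycle pattern reproduces itself under every iterated barycentric subdivision. No amount of barycentric refinement yields flag-no-square.

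The deeper gap is that your scheme of local insertions has no termination or non-interference argument. Inserting a vertex on an edge or re-subdividing a supporting simplex can (and typically does) create new empty $4$-cycles across the interfaces you are patching, and nothing in the proposal controls this; you acknowledge the issue but offer no mechanism to resolve it. This is precisely where the restriction to dimension $3$ must enter in an essential way, and your sketch uses $\dim \mathcal{T} \le 3$ only to bound the number of dimension profiles, which is not the point (that count is finite in every dimension). In Przytycki--\'Swi\k{a}tkowski the construction is not ``barycentric plus local fixes'' but a purpose-built subdivision of the $3$-simplex together with compatibility conditions along faces, designed from the start so that no empty squares arise globally. Without a concrete subdivision rule and a proof that it kills all empty $4$-cycles without creating new ones, the proposal does not constitute a proof.
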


The following proposition is a consequence of \cite[Theorem 17.1]{Moussong-hyperbolic} (see also \cite[Corollary 12.6.3]{davis:book2}).

\begin{proposition}
\label{prop:noZ2}
Let $\mathcal{T}$ be a simplicial complex that is flag-no-square. Let $W(\mathcal{T})$ be the right-angled Coxeter group associated with $\mathcal{T}$ (as defined in \S\ref{subsec:coxeter}). Then $W(\mathcal{T})$ is word hyperbolic; hence, it does not contain any subgroup that is isomorphic to $\mathbb Z^2$. 
\end{proposition}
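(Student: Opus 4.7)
The plan is to deduce both assertions from well-established structure theory for Coxeter groups, since the statement is essentially a packaging of two classical results. The first assertion---word hyperbolicity of $W(\mathcal{T})$---will come from Moussong's hyperbolicity criterion, while the second---absence of a $\mathbb{Z}^2$ subgroup---will come from the general fact that abelian subgroups of word hyperbolic groups are virtually cyclic.

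First, I would recall the geometric setup: the Davis complex $\Sigma(\mathcal{T})$ is a complete CAT(0) cube complex on which $W(\mathcal{T})$ acts properly and cocompactly by cubical isometries, with quotient the Davis chamber. By the Svarc--Milnor lemma, $W(\mathcal{T})$ is word hyperbolic if and only if $\Sigma(\mathcal{T})$ is $\delta$-hyperbolic. Moussong's theorem (\cite[Theorem~17.1]{Moussong-hyperbolic}, equivalently \cite[Corollary~12.6.3]{davis:book}) characterizes when this happens in purely combinatorial terms: in the right-angled case it asserts that $\Sigma(\mathcal{T})$ is CAT($-1$), hence $\delta$-hyperbolic, precisely when $\mathcal{T}$ contains no ``empty square,'' i.e., no induced $4$-cycle without a diagonal, in addition to being a flag complex. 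The flag-no-square hypothesis is exactly this condition. The intuition is that an empty $4$-cycle $\{a,b,c,d\}$ in $\mathcal{T}$ with edges $ab,bc,cd,da$ but no diagonal $ac$ or $bd$ produces a standard parabolic subgroup $\langle a,c\rangle \times \langle b,d\rangle \cong D_\infty \times D_\infty$ which stabilizes an isometrically embedded Euclidean plane in $\Sigma(\mathcal{T})$, obstructing hyperbolicity; flag-no-square rules out exactly such flats.

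Second, I would invoke the standard consequence that a word hyperbolic group $G$ cannot contain a subgroup isomorphic to $\mathbb{Z}^2$. The argument, which I would cite rather than reproduce (e.g., Bridson--Haefliger III.$\Gamma$.3.10), is that every element of infinite order in $G$ has a virtually cyclic centralizer, so any abelian subgroup of $G$ is virtually cyclic; in particular it cannot be $\mathbb{Z}^2$. Combining these two steps yields the proposition.

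The only nontrivial point to verify is the matching of hypotheses: that ``flag-no-square'' coincides with Moussong's combinatorial criterion in the right-angled setting. In full generality Moussong's condition involves ruling out affine and Euclidean Coxeter subsystems, but for right-angled Coxeter groups the only irreducible Euclidean Coxeter system that can arise as a parabolic is $\widetilde{A}_1 \times \widetilde{A}_1$, and such a subsystem is generated exactly by the vertex set of an empty $4$-cycle in $\mathcal{T}$. Since I am content to cite \cite[Theorem~17.1]{Moussong-hyperbolic} and \cite[Corollary~12.6.3]{davis:book}, this is the only place where a genuine check is needed, and it is straightforward. No new obstacle is expected beyond this bookkeeping.
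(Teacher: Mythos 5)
Your proposal is correct and follows the same route as the paper, which simply cites Moussong's hyperbolicity criterion (\cite[Theorem~17.1]{Moussong-hyperbolic}, \cite[Corollary~12.6.3]{davis:book}) together with the standard fact that word-hyperbolic groups contain no $\mathbb{Z}^2$ subgroup. One small slip in your bookkeeping: $\widetilde{A}_1\times\widetilde{A}_1$ is a reducible, not irreducible, affine system --- the only irreducible affine system that can appear in a right-angled Coxeter group is $\widetilde{A}_1$, and an empty $4$-cycle produces the reducible rank-$4$ system $\widetilde{A}_1\times\widetilde{A}_1\cong D_\infty\times D_\infty$, which is exactly the obstruction ruled out by the no-square condition.
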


In turn, the above results provide a degree of control over $\mathbb{Z}^2$-subgroups of $\pi_1(Q(X))$, enabling us to prove that $\pi_1(D(X))$, which is an infinite free product of copies of $\pi_1(X) \cong \mathbb{Z}^2$, is a characteristic subgroup of $\pi_1(Q(X))$.  

\begin{lemma}\label{lem:characteristic}
Let $X$ be a compact 4-manifold equipped with a flag triangulation $\mathcal{T}$ of $\partial X$. Let $W=W(\mathcal{T})$ be the right-angled Coxeter group associated with $\mathcal T$. Let $D(X)$ be the associated Davis complex as in \S\ref{sec:reflection},  with the natural action $W\curvearrowright D(X)$, and let $Q(X)$ be the quotient of $D(X)$ by a torsion-free finite index subgroup  $W_0$ of $W$. 
If $\pi_1(X)$ is isomorphic to $\mathbb{Z}^2$ and the triangulation $\mathcal T$ of $\partial X$ is flag-no-square, then $\pi_1(D(X))$ is a characteristic subgroup of $\pi_1 (Q(X))$.
\end{lemma}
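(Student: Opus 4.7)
The plan is to identify $\pi_1(D(X))$ with the subgroup of $\pi_1(Q(X))$ generated by all its $\mathbb{Z}^2$ subgroups. Since any automorphism of $\pi_1(Q(X))$ permutes its $\mathbb{Z}^2$ subgroups and so preserves the subgroup they generate, such an internal characterization would immediately make $\pi_1(D(X))$ characteristic. Lemma~\ref{lem:boundary-sum} identifies $\pi_1(D(X)) \cong *_{g \in W}\pi_1(X_g)$ as an infinite free product of copies of $\mathbb{Z}^2$, so $\pi_1(D(X))$ is generated by its own $\mathbb{Z}^2$ subgroups and one containment is immediate. The lemma therefore reduces to showing that every $\mathbb{Z}^2 \leq \pi_1(Q(X))$ lies in $\pi_1(D(X))$.

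For this I would use the short exact sequence $1 \to \pi_1(D(X)) \to \pi_1(Q(X)) \to W_0 \to 1$. The flag-no-square hypothesis and Proposition~\ref{prop:noZ2} give that $W_0$ is torsion-free and word hyperbolic, so all its abelian subgroups are infinite cyclic or trivial. Given $H = \langle a,b\rangle \cong \mathbb{Z}^2$ and projecting to $W_0$, if the image is trivial we are done; otherwise I may adjust the basis so that $\bar a = t \neq 1$, $b \in \pi_1(D(X))$, and $[a,b]=1$. The relation $aba^{-1}=b$ says that the conjugacy class of $b$ in $\pi_1(D(X))$ is fixed by the outer action of $t$, and that outer action permutes the free product factors via $\pi_1(X_g) \mapsto \pi_1(X_{tg})$ because the $W_0$-action on $D(X)$ translates the tile index $g$ by $t$. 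The argument will hinge on the fact that such an outer automorphism, with $t$ of infinite order, cannot fix any conjugacy class in $*_g\pi_1(X_g)$.

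I would split on whether $b$ is conjugate into a single factor. If so, I may assume $b \in \pi_1(X_g)$; then $b = aba^{-1}$ lies in $a\,\pi_1(X_g)\,a^{-1}$, which by Kurosh is $\pi_1(D(X))$-conjugate to $\pi_1(X_{tg})$. Hence $b \in \pi_1(X_g) \cap c\,\pi_1(X_{tg})\,c^{-1}$ for some $c \in \pi_1(D(X))$, and since distinct factors of a free product (and any of their conjugates) have trivial intersection, $b = 1$, a contradiction. Otherwise $b$ is cyclically reduced of length $n \geq 2$ with factor sequence $(g_1,\dots,g_n)$, and the outer $t$-action sends this cyclic word to one with factor sequence $(tg_1,\dots,tg_n)$; invariance of the conjugacy class forces a cyclic rotation by some $k$ equating the two sequences, from which iterating gives $t^{n/\gcd(n,k)} = 1$ in $W$, contradicting that $t$ has infinite order. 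The main obstacle I anticipate is the second sub-case, where one must verify carefully that conjugation by $a$ --- which a priori introduces inner-conjugating elements $c_i \in \pi_1(D(X))$ with $a\,\pi_1(X_{g_i})\,a^{-1} = c_i\,\pi_1(X_{tg_i})\,c_i^{-1}$ --- acts on the cyclic factor sequence as claimed, with the $c_i$ contributions disappearing at the level of conjugacy classes and leaving a clean combinatorial obstruction.
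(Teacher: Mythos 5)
Your overall strategy coincides with the paper's: reduce to showing every $\mathbb{Z}^2 \le \pi_1(Q(X))$ lies inside $\pi_1(D(X))$, use the short exact sequence $1 \to \pi_1(D(X)) \to \pi_1(Q(X)) \to W_0 \to 1$ together with word hyperbolicity of $W_0$ (Proposition~\ref{prop:noZ2}) to force the image $L$ to be trivial or $\mathbb{Z}$, and then derive a contradiction from $aba^{-1}=b$ with $b \in N = H\cap\pi_1(D(X))$. Your first sub-case (where $b$ is conjugate into a single free factor) is correct: it uses only that conjugation by $a$ permutes factor conjugacy classes by $g \mapsto tg$ (which follows from the description of the $W$-action on tiles) together with the malnormality of conjugates of free factors.

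The gap is in the second sub-case, and it is precisely the one you flag. The cyclic factor sequence of a conjugacy class in a free product is \emph{not} carried to the ``index-translated'' sequence $(tg_1,\dots,tg_n)$ by an automorphism that merely sends each $A_g$ to a conjugate of $A_{tg}$; the conjugating elements $c_i$ are a genuine obstruction, not a bookkeeping nuisance. A concrete counterexample to the transformation rule you want: in $G=A*B*C$, let $\Phi$ conjugate $A$ by a fixed $b_0\in B$ and act as the identity on $B$ and $C$. The induced permutation of factor conjugacy classes is trivial, so if cyclic types transformed naively they would be preserved; yet for $w=a_0c_0$ (cyclic type $(A,C)$, length $2$) one gets $\Phi(w)=b_0a_0b_0^{-1}c_0$ (cyclic type $(B,A,B,C)$, length $4$). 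So the ``clean combinatorial obstruction'' does not follow from what you have written, and the argument as stated does not close.

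The paper avoids this problem by never computing with normal forms. It phrases $aba^{-1}=b$ as the statement that a loop $\alpha$ representing a generator of $N$ is freely homotopic in $D(X)$ to $\bar g^{m}(\alpha)$ for every $m$. It then chooses $m$ large enough that $\bar g^{m}$ carries the compact set $P_{n_0}\supset\alpha$ off of itself (equation~\eqref{eq:disjoint}), and collapses all chambers $g_iY_1$ with $i>n_0$ to points, obtaining a quotient $\bar D(X)$. A van Kampen argument using Lemma~\ref{lem:disk} shows each $\bar P_n\hookrightarrow\bar D(X)$ is $\pi_1$-injective, so $\pi(\alpha)$ remains essential, while $\pi(\bar g^{m}(\alpha))$ lies in a collapsed, simply connected region and becomes nullhomotopic --- a contradiction. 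If you want to repair your combinatorial approach rather than adopt the topological one, the natural fix is to replace the cyclic word calculation by an action of $\pi_1(Q(X))$ on the Bass--Serre tree of the free product $\pi_1(D(X))=*_g A_g$: if $b$ is elliptic it fixes a unique vertex and $a$ must stabilize it, forcing $\bar a=1$; if $b$ is hyperbolic then $\langle a,b\rangle$ preserves the axis of $b$ and acts faithfully by translations on a line, impossible for $\mathbb{Z}^2$. However, this requires first constructing and verifying the extension of the $\pi_1(Q(X))$-action to this tree, which is an additional step your write-up does not supply.
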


\begin{proof}
By construction, $D(X)$ is a normal covering of $Q(X)$ with deck transformation group $W_0$. This gives an exact sequence:
\begin{equation}\label{eq:ses1}
1\to \pi_1(D(X))\to \pi_1(Q(X))\to W_0\to 1.
\end{equation}
We claim that if $H\le \pi_1(Q(X))$ is isomorphic to $\mathbb Z^2$, then $H\le \pi_1(D(X))$. Recall that $\pi_1(D(X))$ is a free product of copies of $\pi_1 (X)$, with one copy of $\pi_1(X)$ for each element in $W$ (cf.~\cite[Remark~15.9]{davis:aspherical}). As $\pi_1(X)\cong \mathbb Z^2$, this claim implies that for any automorphism $f$ of $\pi_1(Q(X))$, we have $f(\pi_1(D(X)))\le \pi_1(D(X))$; i.e., $\pi_1(D(X))$ is a characteristic subgroup of $\pi_1(Q(X))$.

It remains to prove the claim. Since the triangulation of $\partial X$ is flag-no-square, Proposition~\ref{prop:noZ2} implies that $W_0$ does not contain a subgroup isomorphic to $\mathbb Z^2$. Let $L$ be the image of $H$ under $\pi_1(Q(X))\to W_0$. Since $W_0$ is torsion-free and word hyperbolic, if $L$ is nontrivial, the only possibility is $L\cong \mathbb Z$. Next we will show that $L$ being isomorphic to $\mathbb Z$  leads to a contradiction, proving the claim. Our strategy will be to consider the action of $L \subset W_0$ on the complementary subgroup $N= H \cap \pi_1(D(X)) \cong \mathbb{Z}$ in the restricted short exact sequence
\begin{equation*}
1\to N \to H \to L \to 1
\end{equation*}
that parallels \eqref{eq:ses1}; the fact that $H$ is abelian constrains the $L$-action on $N$ and leads to a contradiction.

To this end, note that each element of $W_0$ lifts to an element in $\pi_1(Q(X))$, which gives an automorphism of $\pi_1(D(X))$; moreover, different lifts give rise to the same automorphism up to an inner automorphism of $\pi_1(D(X))$. This gives a well-defined homomorphism $\varphi:W_0\to \operatorname{Out}(\pi_1(D(X)))$, where $\operatorname{Out}(\pi_1(D(X)))$ denotes the outer automorphism group of $\pi_1(D(X))$. Later, we will need the following topological description of $\varphi$. Take a base point $p\in D(X)$. Then we can identify $\pi_1(D(X),p)$ with $\pi_1(D(X),q)$ for any $q\neq p$, by choosing a path from $p$ to $q$. This identification is well-defined up to an inner automorphism of $\pi_1(D(X),p)$. Given an element $\bar g\in W_0$,  the action $W_0\curvearrowright D(X)$ by deck transformations gives an isomorphism $\bar g_*:\pi_1(D(X),p)\to\pi_1(D(X),\bar g(p))$. As we can identify $\pi_1(D(X),\bar g(p))$ with $\pi_1(D(X),p)$, the isomorphism $\bar g_*$ gives an element in the outer automorphism group of $\pi_1(D(X),p)$, which is exactly $\varphi(\bar g)$ for the map $\varphi$ defined above.

Now suppose $L\cong \mathbb Z$, and let $\bar g\in L$ be a generator of $L$. Let $N=\pi_1(D(X))\cap H\cong \mathbb{Z}$ be defined as before. 
 Let $g\in H\le \pi_1(Q(X))$ be a lift of $\bar g$. Since $H$ is abelian, we have $g h g^{-1}=h$ for all $h \in N \le H$. So, conjugation by $g$ gives an automorphism $\theta_g:\pi_1(D(X))\to \pi_1(D(X))$ that restricts to the identity on $N$.
 The element in the outer automorphism group of $\pi_1(D(X))$ represented by $\theta_g$ is exactly $\varphi(\bar g)$, where $\varphi:W_0\to \operatorname{Out}(\pi_1(D(X)))$ was defined in the previous paragraph. In particular, $\varphi(\bar g)$ fixes the $\pi_1(D(X))$-conjugacy class of each element of $N$.
  (Note that the action of $\operatorname{Out}(\pi_1(D(X)))$  is only well-defined on the conjugacy classes of elements in $\pi_1(D(X))$. We have a particular lift of $\varphi(\bar g)$ to $\operatorname{Aut}(\pi_1(D(X)))$, namely conjugation by $g$, that acts as the identity on each element of $N$, hence the associated outer  automorphism $\varphi(\bar g)$ acts as the identity on the $\pi_1(D(X))$-conjugacy class of $N$.)
 
Now let $\alpha$ be a loop based at $p$ that represents a generator of $N$ in $\pi_1(D(X),p)$. Then the previous paragraphs imply that for any path $\beta$ from $p$ to $\bar g(p)$, the loop $\beta\bar g(\alpha)\beta^{-1}$ gives an element in $\pi_1(D(X),p)$ that is conjugate to $[\alpha]$ by an element of $\pi_1(D(X),p)$. Here $\beta^{-1}$ denotes the inverse path of $\beta$. In particular, this  implies that $\alpha$ and $\bar g(\alpha)$ are freely homotopic in $D(X)$. Thus $\alpha$ and $\bar g^m(\alpha)$ are freely homotopic in $D(X)$ for any $m\ge 1$. We will show below that this leads to a contradiction.

Take a fundamental domain $X_1\cong X$ for the action of $W$ on $D(X)$. As in \S\ref{subsec:davis space}, we choose an enumeration $g_1=\operatorname{id},g_2,g_3,\dots$ of elements of $W$ such that $\ell(g_i)\le \ell(g_j)$ whenever $i\leq j$.
Let $X_1=P_1\subset P_2\subset P_3\subset \cdots$ be the exhaustion of $D(X)$ defined in \S\ref{subsec:davis space} with $P_n=\bigcup_{1\le i\le n}g_i X_1$. Since the image of $\alpha$ is compact, there exists $n_0$ so that its image is contained in $P_{n_0}$. Since the  group action $W\curvearrowright D(X)$ is proper and $\bar g$ has infinite order in $W$, we know that there is an $m_0>0$ such that 
\begin{equation}
\label{eq:disjoint}
    P_{n_0}\cap \bar g^{m_0}(P_{n_0})=\emptyset.
\end{equation}

Let $Y\subset X$ be a subset obtained by removing a collar neighborhood of $\partial X$ in $X$ (homeomorphic to $\partial X\times [0,1)$) from $X$. We collapse $Y\subset X$ to a point and obtain the topological space $\bar X$, which is homeomorphic to a cone over $\partial X$. Let $Y_1\subset X_1$ be the subspace of $X_1$ arising from $Y\subset X$. For each $i>n_0$, we collapse $g_i Y_1$ in $D(X)$ to a point. This gives a new topological space $\bar D(X)$, with $\pi: D(X)\to \bar D(X)$ being the natural continuous map. As $\alpha$ and $\bar g^{m_0}(\alpha)$ are freely homotopic in $D(X)$, we know $\pi(\alpha)$ and $\pi(\bar g^{m_0}(\alpha))$ are freely homotopic in $\bar D(X)$. In what remains, we will show $\pi(\bar g^{m_0}(\alpha))$ is null-homotopic in $\bar D(X)$, but $\pi(\alpha)$ is homotopically nontrivial in $\bar D(X)$, which gives the desired contradiction.

Let $\bar P_n=\pi(P_n)$. As the procedure for obtaining $\bar D(X)$ from $D(X)$ does not change the boundary of each chamber, we know from Lemma~\ref{lem:disk} that $\bar P_n\cap \pi(g_{n+1} X_1)$ is a codimension-zero disk in $\partial \bar P_n$. Hence, the van Kampen theorem implies that $\bar P_n\to \bar P_{n+1}$ is $\pi_1$-injective for each $n$. (For $n \geq n_0$, this map is in fact $\pi_1$-bijective, since $\bar P_{n+1}$ is the union along a disk of $\bar P_n$ with a contractible space.) This implies that $\bar P_n\to \bar D(X)$ is $\pi_1$-injective for each $n$.
Since $\alpha$ is homotopically nontrivial in $D(X)$, we know it is homotopically nontrivial in $P_{n_0}$. As $\pi$ restricted to $P_{n_0}$ is a homeomorphism onto $\bar P_{n_0}$, we know $\pi(\alpha)$ is homotopically nontrivial in $\bar P_{n_0}$. Hence, $\pi(\alpha)$ is homotopically nontrivial in $\bar D(X)$.

Next, consider $\pi(\bar g^{m_0}(\alpha))$. By \eqref{eq:disjoint}, each chamber in $\bar g^{m_0}(P_{n_0})$ is collapsed under $\pi$. We apply the Davis construction to $\bar X$ to obtain $D(\bar X)$. Note that $D(\bar X)$ has a similar exhaustion, denoted by $R_1\subset R_2\subset \cdots$. As $\bar X$ (that is, the cone on $\partial X$) is simply-connected, we know $R_n$ is simply-connected for each $n$ by the same argument as the previous paragraph. By construction, $\pi(\bar g^{m_0}(P_{n_0}))$ is homeomorphic to $R_{n_0}$, and hence, also is simply-connected. It follows that $\pi(\bar g^{m_0}(\alpha))$ is null-homotopic in $\pi(\bar g^{m_0}(P_{n_0}))$, and hence, is null-homotopic in $\bar D(X)$, as desired.
\end{proof}

\subsection{Conclusion} We can now complete the proof of our main result. 

\begin{proof}[Proof of Theorem~\ref{thm:exotic-mfld}]
    By Theorem~\ref{thm:relative}, there is a homeomorphism $f: Q(X) \to Q(X')$ which,  by construction, induces a map $f_*$ carrying $\pi_1(D(X))\leq \pi_1(Q(X))$ isomorphically to $\pi_1(D(X'))\leq \pi_1(Q(X'))$. 
    
    Next suppose there is a diffeomorphism $g: Q(X) \to Q(X')$. By Theorem~\ref{thm:lifttoD}, to obtain a contradiction, it suffices to show that this lifts to a diffeomorphism $\tilde g: D(X) \to D(X')$. Such a lift exists if and only if $g_*(\pi_1(D(X)))$ lies inside $\pi_1(D(X')) \leq \pi_1(Q(X'))$. To show that this condition is met, note that $f_*^{-1}\circ g_*$ is an automorphism of $\pi_1(Q(X))$, and hence, preserves the characteristic subgroup $\pi_1(D(X))$ by Lemma~\ref{lem:characteristic}. Since $f_*^{-1}$ restricts to an isomorphism between $\pi_1(D(X'))$ and $\pi_1(D(X))$, it follows that $g_*$ must carry $\pi_1(D(X))$ to $\pi_1(D(X'))$, as desired.     
\end{proof}

\begin{remark}\label{rem:infinite}
\textbf{(a)} We can obtain infinitely many distinct pairs of exotic aspherical closed 4-manifolds starting from the same fixed pair of input 4-manifolds $X$ and $X'$ and a fixed flag-no-square triangulation $\mathcal T$ of $\partial X = \partial X'$ as follows. Recall from Section~\ref{subsec:quotient} that the construction of the closed manifold $Q(X)$ depends on the choice of a finite index torsion-free subgroup $W_0$ of the right-angled Coxeter group $W$ associated to $\mathcal T$. Choose an infinite strictly descending chain $W_1\supsetneq W_2\supsetneq W_3\supsetneq \cdots$ of finite index torsion-free subgroups of $W$; as any right-angled Coxeter group is residually finite, we can arrange such a chain such that $\cap_{i=1}^\infty W_i=\{1\}$. Applying the construction of \S\ref{subsec:quotient}, we get infinitely many exotic aspherical pairs. Indeed, writing $Q_i(X)$ (resp.~$Q_i(X')$) for the aspherical manifolds obtained from $W_i$ using $X$ (resp.~$X'$) as chamber, then $Q_i(X)$ and $Q_i(X')$ are an exotic pair.
 
Moreover, $Q_i(X)$ and $Q_j(X)$ are not homeomorphic when $i\neq j$, otherwise we would have an isomorphism between $\pi_1(Q_i(X))$ and $\pi_1(Q_j(X))$, which preserves the $\pi_1(D(X))$ subgroup by the same argument as Lemma~\ref{lem:characteristic}. This would give an isomorphism between $W_i$ and $W_j$. However, as $W_i$ is a one-ended torsion-free hyperbolic group, it is not isomorphic to any proper subgroup of itself \cite{sela1997structure}.

\textbf{(b)} Alternatively, by applying our arguments to the 4-manifolds $X_m$ and $X_m'$ described in Remark~\ref{rem:extend}, we can produce infinitely many exotic pairs of closed aspherical 4-manifolds $Q(X_m)$ and $Q(X_m')$. The lifting arguments of \S\ref{subsec:characteristic} apply directly, so any diffeomorphism $Q(X_m) \to Q(X_m')$ would lift to a diffeomorphism $D(X_m) \to D(X_m')$. However,  arguing as in the proof of Theorem~\ref{thm:relative} shows that $D(X_m')$ contains a homologically essential, smoothly embedded torus while $D(X_m)$ has no homologically essential, smoothly embedded surface of genus less than $2 + m$. 

Moreover, the same argument shows that $Q(X_m)$ and $Q(X_n)$ are not diffeomorphic when $m \gg n$. Indeed, the proof of Lemma~\ref{lem:characteristic} implies that any diffeomorphism $Q(X_m) \to Q(X_n)$ would induce a map on $\pi_1$ carrying $\pi_1(D(X_m))$ into $\pi_1(D(X_n))$, and hence would lift to a diffeomorphism $D(X_m) \to D(X_n)$, contradicting the genus bounds above.  While we expect that varying $m$ and $n$ always yields pairs that are not homeomorphic, we do not address this question here.
\end{remark}

\bibliographystyle{alphaurl}

\bibliography{biblio}

 \end{document}